\DeclareMathOperator{\Cov}{Cov}
\newcommand{\phe}{\varphi}
\newcommand{\ds}{\displaystyle}
\DeclareMathOperator{\Ima}{Im}
\newcommand{\Proba}{\mathbb{P}}
\DeclareMathOperator{\Rea}{Re}
\newcommand{\dsum}{\displaystyle\sum}
\numberwithin{equation}{section}
\newcommand{\eps}{\varepsilon}
\newcommand{\rd}{{\rm d}}
\newcommand{\be}{\begin{equation}}
\newcommand{\ee}{\end{equation}}
\renewcommand{\epsilon}{\varepsilon}
\renewcommand{\leq}{\leqslant}
\renewcommand{\geq}{\geqslant}
\newcommand{\E}{\mathbb{E}}
\newcommand{\R}{\mathbb{R}}
\newcommand{\C}{\mathbb{C}}
\newcommand{\N}{\mathbb{N}}
\theoremstyle{plain} 
\newtheorem{theorem}{Theorem}[section]
\newtheorem*{theorem*}{Theorem}
\newtheorem{lemma}[theorem]{Lemma}
\newtheorem*{lemma*}{Lemma}
\newtheorem{corollary}[theorem]{Corollary}
\newtheorem*{corollary*}{Corollary}
\newtheorem{proposition}[theorem]{Proposition}
\newtheorem*{proposition*}{Proposition}
\newtheorem*{definition*}{Definition}
\newtheorem*{example*}{Example}
\newtheorem{remark}[theorem]{Remark}
\newtheorem*{remark*}{Remark}
\newtheorem*{remarks*}{Remarks}
\renewcommand{\subsection}{\@startsection
{subsection}
{2}
{0mm}
{-\baselineskip}
{0 \baselineskip}
{\normalfont\bf\itshape}} 
\newcommand{\Hf}{\dfrac{1}{2}}
\def\@empty{}
\def\author#1{\par
    {\centering{\authorfont#1}\par\vspace*{0.05in}}
}
\def\titlefont{\fontsize{13}{15}\bfseries\boldmath\selectfont\centering{}}
\def\authorfont{\fontsize{13}{15}}
\def\abstractfont{\fontsize{8}{10}}
\let\affiliationfont\rhfont
\def\address#1{\par
    {\centering{\affiliationfont#1\par}}\par\vspace*{11pt}
}
\def\body{
\setcounter{footnote}{0}
\def\thefootnote{\alph{footnote}}
\def\@makefnmark{{$^{\rm \@thefnmark}$}}
}
\def\title#1{
    \thispagestyle{plain}
    \vspace*{-14pt}
    \vskip 79pt
    {\centering{\titlefont #1\par}}%
    \vskip 1em
}
\renewenvironment{abstract}{\par%
    \vspace*{6pt}\noindent 
    \abstractfont
    \noindent\leftskip10pt\rightskip10pt
}{%
  \par}
\renewcommand{\section}{\@startsection
{section}
{1}
{0mm}
{-2\baselineskip}
{1\baselineskip}
{\normalfont\large\scshape\centering}} 
\newcommand{\hf}{\frac{1}{2}}
\begin{document}

~\vspace{-1.4cm}

\title{Brownian behaviour of the Riemann zeta function around the critical line}

\vspace{1cm}
\noindent

\begin{minipage}[b]{0.3\textwidth}
\hspace{3cm}
 
 \end{minipage}
 \begin{minipage}[b]{0.3\textwidth}
 \author{Louis Vassaux}

\address{INRIA, DMA/ENS
\\
louis.vassaux@ens.psl.eu}
 \end{minipage}

\begin{minipage}[b]{0.3\textwidth}

 \end{minipage}

\begin{abstract}
We establish a Brownian extension to Selberg's central limit theorem for the Riemann zeta function.  This implies various limiting distributions for $\zeta$, including an analogue of the reflection principle for the maximum of the Brownian motion: as $T$ diverges,  for any $u>0$ we have
\[
\frac{1}{T}\cdot {\rm meas}\Big\{0\leq t\leq T:\max_{\sigma\geq \tfrac{1}{2}}\log|\zeta(\sigma+i t)|\geq u \sqrt{\tfrac{1}{2}\log \log T} \Big\}\to 2\int_u^{\infty} \frac{e^{-\frac{x^2}{2}}}{\sqrt{2\pi}}\rd x.
\]
\end{abstract}

\vspace{-0.3cm}

\tableofcontents

\vspace{0.2cm}

\section{Introduction}

\subsection{Statistics on $\zeta$.}\ 
Selberg's central limit theorem  \cite{Sel1946} states that for $\tau$ is chosen uniformly at random from $[0, T]$,

\begin{equation}
    \label{clt}
    \dfrac{1}{\sqrt{\log \log T}}\, \log \zeta\big(\hf + i \tau\big) \overset{\text{law}}{\longrightarrow} \mathscr{N}_{\mathbb{C}}
\end{equation}
as $T \rightarrow + \infty$,  where $\mathscr{N}_{\mathbb{C}}$ is a standard complex Gaussian random variable, i.e.   with independent normal real and imaginary parts of variance $\tfrac{1}{2}$.  A simple proof for $\log |\zeta|$ can be found in \cite{RadSou2017}. 

This theorem was the first major result on the statistical behaviour of $\zeta$ around the critical line; since then,  many advances in probabilistic number theory involve new statistics for $\zeta$.  A prominent example is Montgomery's pair correlation \cite{pair}, which conjecturally identifies the local spacings between high zeros of Riemann zeta function with the scaling limit of random matrices from  the Gaussian unitary ensemble.

More recently,  the  Fyodorov-Hiary-Keating conjecture \cite{FyoHiaKea2012,fyodorov2014freezing} proposed to extend this analogy to extreme values statistics,  through precise estimates for the maximum of the Riemann zeta function over short intervals on the critical line.  In particular,  these papers conjectured that,  if $\tau \in [T, 2T]$ is chosen uniformly at random, 
\begin{equation}\label{FHK}
    \max_{|h| \leq 1} |\zeta(\hf + i \tau + i h)| \asymp \frac{\log T}{(\log \log T)^{\frac{3}{4}}}
\end{equation}
meaning that the ratio between the two sides is tight as $T \to \infty$.
  After initial progress in \cite{Naj16,ArgBelBouRadSou2016,Har2019},  these estimates on this ratio and its universal tail asymptotics were proved in 
\cite{arguin2020fyodorov1,arguin2020fyodorov2}. 

One natural question we consider in this paper is about the maximum of $\zeta$ over horizontal intervals. Our main result is to control the behaviour of $\zeta$ in intervals of the form $[\hf+ i \tau, \hf + \eta + i \tau]$ by exhibiting a form of functional tightness. This - for example - will allow us to show that for large $T$
\begin{equation}\label{eqn:maxPr}
    \frac{1}{\sqrt{\log \log T}}\max_{\sigma \geq \tfrac{1}{2}} \log|\zeta( \sigma + i \tau)|\overset{\text{law}}{\longrightarrow} |\mathscr{N}(0,\hf)|,
\end{equation}
where, again, $\tau \in [0,T]$ is chosen uniformly at random.  In particular, while the maximum (\ref{FHK}) over vertical short intervals exhibits a subtle, larger scaling than (\ref{clt}),
the  size order for the maximum over horizontal intervals agrees with the normalization in Selberg's central limit theorem, but the limiting random variable is the absolute value of a Gaussian instead of a Gaussian,
mirroring the reflection principle for the Brownian motion.

\subsection{Main result.}\ 
Our main statement is as follows.

\begin{theorem}
\label{mainthm}
    Let  $T > 10$ an $\tau$ be a uniform random variable on $[0, T]$. Define
\begin{displaymath}
Z^{(T)}:
\left.
  \begin{array}{rcl}
    [0, 1] & \longrightarrow &\C \\
    \alpha & \longmapsto & \dfrac{1}{\sqrt{\log \log T}} \log \zeta (\hf + \dfrac{1}{(\log T)^{\alpha}} + i \tau). \\
  \end{array}
\right.
\end{displaymath}
Then, as $T \rightarrow \infty$, $Z^{(T)}$ converges in law,  for the topology of uniform convergence in $\mathcal{C}^0([0,1], \C)$, to a standard complex Brownian motion $B$, i.e. 
$B=\tfrac{1}{\sqrt{2}}(B_1+iB_2)$ where $B_1$ and $B_2$ are independent,  standard real Brownian motions. 
\end{theorem}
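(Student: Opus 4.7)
The plan is to establish functional convergence by the classical two-step strategy: prove convergence of finite-dimensional distributions, and tightness of the family of laws in $\mathcal{C}^0([0,1],\mathbb{C})$. Both steps become tractable after first reducing $\log\zeta$ on this one-parameter family of vertical lines to a short Dirichlet polynomial over primes, uniformly in $\alpha\in[0,1]$.

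The first step is to establish a uniform-in-$\alpha$ approximation of the form
\[
\log\zeta\pa{\tfrac{1}{2}+(\log T)^{-\alpha}+i\tau} \;=\; P_\alpha(\tau)+o\pa{\sqrt{\log\log T}},\qquad P_\alpha(\tau)\deq \sum_{p\leq X}\frac{1}{p^{1/2+(\log T)^{-\alpha}+i\tau}},
\]
for a suitable cut-off $X=X(T)$, valid outside a set of $\tau\in[0,T]$ of measure $o(T)$. This is the standard workhorse behind proofs of Selberg's CLT, obtained via Selberg's identity (or the explicit formula) combined with a zero-density estimate. Here however it must be arranged so that the error is controlled simultaneously for all $\alpha\in[0,1]$, which essentially amounts to a joint second-moment bound in the spirit of Radziwi{\l}{\l}-Soundararajan. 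The subtle regime is $\alpha$ close to $1$, where the evaluation point lies within distance $\sim(\log T)^{-1}$ of the critical line and $\log\zeta$ can spike near zeros of $\zeta$; such spikes must be absorbed into the exceptional $\tau$-set.

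For the finite-dimensional distributions at points $0\leq\alpha_1<\cdots<\alpha_k\leq 1$, I would apply the method of moments to the vector $\pa{P_{\alpha_1}(\tau),\ldots,P_{\alpha_k}(\tau)}$ under uniform $\tau\in[0,T]$. Near-orthogonality of the phases $\ha{p^{-i\tau}}_{p\text{ prime}}$ under the average $T^{-1}\int_0^T\cdot\,\dd\tau$ reduces correlation computations to prime sums: with $\sigma_\alpha\deq(\log T)^{-\alpha}$,
\[
\frac{1}{T}\int_0^T P_\alpha(\tau)\,\overline{P_\beta(\tau)}\,\dd\tau \;=\; \sum_{p\leq X}\frac{1}{p^{1+\sigma_\alpha+\sigma_\beta}}+O(1) \;=\; \min(\alpha,\beta)\log\log T+O(1),
\]
by Mertens' theorem, using $\sigma_\alpha+\sigma_\beta\asymp(\log T)^{-\min(\alpha,\beta)}$, while $\frac{1}{T}\int_0^T P_\alpha(\tau)P_\beta(\tau)\,\dd\tau=O(1)$. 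Higher mixed moments expand into sums over prime tuples: off-diagonal tuples vanish by oscillation in $\tau$, while diagonal tuples reproduce the Wick contractions of a Gaussian vector. After normalisation by $\sqrt{\log\log T}$ this yields the covariance $\mathbb{E}[B_\alpha\overline{B_\beta}]=\min(\alpha,\beta)$ and $\mathbb{E}[B_\alpha B_\beta]=0$, which is exactly that of the announced complex Brownian motion.

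Tightness would follow from a Kolmogorov-Chentsov moment bound
\[
\frac{1}{T}\int_0^T \absb{Z^{(T)}(\alpha)-Z^{(T)}(\beta)}^{2k}\,\dd\tau \;\leq\; C_k\,\absb{\alpha-\beta}^k
\]
for some fixed integer $k\geq 2$. Via the Dirichlet polynomial approximation this reduces to a moment estimate on $P_\alpha-P_\beta$, which is essentially supported on primes in the range $\exp\pa{(\log T)^{\alpha}}<p\leq\exp\pa{(\log T)^{\beta}}$ whose reciprocal sum is $(\beta-\alpha)\log\log T+O(1)$; the same Wick-type computation as above produces the desired bound. Kolmogorov's criterion then gives H\"older continuity of exponent strictly below $\tfrac12$, hence tightness in $\mathcal{C}^0([0,1],\mathbb{C})$. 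The principal obstacle is the uniform Dirichlet polynomial approximation in the regime $\alpha$ close to $1$: simultaneously discarding, for all such $\alpha$, the $\tau$-neighbourhoods of zeros of $\zeta$ that would otherwise spoil the approximation. Once this is achieved, the remainder of the proof reduces to the orthogonality-and-moment machinery of Selberg's original method.
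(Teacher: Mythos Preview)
Your overall architecture matches the paper's: finite-dimensional convergence plus tightness, both mediated by a Dirichlet-polynomial approximation, and your identification of the covariance $\min(\alpha,\beta)$ and of the dangerous regime $\alpha\to 1$ is correct. The finite-dimensional step is essentially the paper's Theorem~\ref{fidi}.

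The gap is in the tightness step, in the sentence ``via the Dirichlet polynomial approximation this reduces to a moment estimate on $P_\alpha-P_\beta$''. Write $\log\zeta(\tfrac12+\sigma_\alpha+i\tau)=P_\alpha(\tau)+E_\alpha(\tau)$. Knowing only that $E_\alpha=o(\sqrt{\log\log T})$ uniformly in $\alpha$---concretely $\mathbb{E}|E_\alpha|^{2k}=O(1)$, which is what Selberg's method delivers---gives merely $\mathbb{E}|E_\alpha-E_\beta|^{2k}=O(1)$; after normalisation this leaves a residual $(\log\log T)^{-k}$ in the increment moment that does not depend on $|\alpha-\beta|$ and therefore ruins the Kolmogorov--Chentsov bound whenever $|\alpha-\beta|\ll(\log\log T)^{-1}$. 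The alternative route, proving $\sup_{\alpha}|E_\alpha|=o(\sqrt{\log\log T})$ on a good $\tau$-set and then transferring tightness from the $P$-process to $Z^{(T)}$, does not follow from a pointwise-in-$\alpha$ approximation and is itself the heart of the problem rather than a preliminary reduction.

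The paper resolves this by bounding the error \emph{increment} directly. It approximates $\zeta'/\zeta$ rather than $\log\zeta$, so that the increment of $\log\zeta$ splits as a Dirichlet increment plus $\int_{\sigma_2}^{\sigma_1} e_x(\sigma+i\tau)\,d\sigma$; Selberg's pointwise estimate $e_x(\sigma+it)\ll x^{-(\sigma-1/2)/2}(\cdots)$ then gives this integral an honest factor depending on $\sigma_1-\sigma_2$. After restricting to an event $A_\eps^T=\{\tau\notin Y_\eps\}$ of probability at least $1-\eps$ (neighbourhoods of zeros excised via a zero-density estimate), its fourth moment is shown to be $\ll_\eps(b-a)^2(\log\log T)^2$, with a separate short argument for $\sigma$ within $O_\eps(1/\log T)$ of the critical line. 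Because the moment bound holds only on $A_\eps^T$, the paper also needs and proves a slight extension of Kolmogorov's criterion allowing such conditioning. Your diagnosis that the obstacle sits near $\alpha=1$ is right, but the missing mechanism is that the error \emph{increment}, not just the error, must carry a power of $|\alpha-\beta|$.
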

\noindent The above convergence means that for any  $F: \mathcal{C}^0 \to \R$ a bounded functional which is continuous for the ${\rm L}^\infty([0,1])$ norm,  we have
\begin{equation}
    \E[F(Z^{(T)})] \underset{T \to + \infty}{\longrightarrow} \E[F(B)].
\end{equation}
\begin{remark} \label{exttoline}
  The theorem above is stated for $\alpha \leq 1$ because the process doesn't exhibit any interesting behaviour beyond this point: if we wish to consider $\alpha \in [0, \infty)$, the limit process would follow the law of 
\begin{equation} \label{defbtilde}
    {\tilde B}_\alpha  = \begin{cases}
    B_\alpha & \text{ if } \alpha \leq 1,\\
    B_1 & \text{ if } \alpha \geq 1.
\end{cases}
\end{equation}
Even in this setting ($\alpha \in [0, \infty)$) we have convergence in distribution of the sequence of functions in $\mathcal{C}^0([0, \infty), \C)$ (for the topology of uniform convergence). The proof is the same,  except some minor changes to the proof of Theorem \ref{kolmo}.
\end{remark}

Theorem \ref{mainthm} implies that  for any 
 $0 \leq \alpha_1, \ldots, \alpha_n \leq 1$ the joint limit distribution of $(Z^{(T)}(\alpha_i))_{1 \leq i \leq n}$ is that of a Gaussian vector (with an appropriate correlation structure, described in Theorem \ref{fidi}). However, this finite-dimensional limit is already well-understood: Our proof of Theorem \ref{fidi} is only a repurposing of the methods from \cite{bourgade}. Our main contribution is instead the fact that the sequence of random functions $(Z^{(T)})_{T \geq 0}$ is tight, which is not \text{a priori} obvious and is necessary to deduce corollaries such as (\ref{eqn:maxPr}).

Finally, we note that for random unitary matrices, there is an analogous result to the finite dimensional convergence,  Theorem \ref{fidi}, also shown in \cite{bourgade}. One might wonder whether we can also generalise this to a functional convergence towards Brownian motion, similarly to Theorem \ref{mainthm}. More specifically, if $U_n$ is a random, Haar-distributed,  $n\times n$ unitary matrix,,  the relevant analogue of $Z^{(T)}$ to consider would be
\begin{displaymath}
\mathcal{Z}^{(n)}:
\left.
  \begin{array}{rcl}
    [0, 1] & \longrightarrow &\C \\
    \alpha & \longmapsto & \dfrac{1}{\sqrt{\log n}}\log\det(\exp(n^{- \alpha})I_n - U_n).\\
  \end{array}
\right.
\end{displaymath}
It seems likely that this process converges in distribution to a standard, complex Brownian motion as $n\to\infty$.

\subsection{Consequences.}\  Convergence in law to Brownian motion gives a few corollaries regarding the global behaviour of the Riemann zeta function in the critical strip; to the author's knowledge, these corollaries are novel.
Similar statements hold if we replace $\log |\zeta|$ with $\Ima \log \zeta$.

\begin{corollary}[Reflection principle] \label{refl}
 The convergence (\ref{eqn:maxPr}) holds.
\end{corollary}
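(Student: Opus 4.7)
My plan is to deduce the corollary from Theorem \ref{mainthm} via the continuous mapping theorem. The supremum functional $F:\mathcal{C}^0([0,1],\C)\to\R$ defined by $F(f)=\sup_{\alpha\in[0,1]}\re f(\alpha)$ is continuous for the uniform norm, so Theorem \ref{mainthm} gives $F(Z^{(T)})\to F(B)$ in distribution. Writing $B=\tfrac{1}{\sqrt 2}(B_1+iB_2)$ with independent standard real Brownian motions $B_1,B_2$, we have $F(B)=\tfrac{1}{\sqrt 2}\sup_{\alpha\in[0,1]}B_1(\alpha)$, and L\'evy's reflection principle for Brownian motion gives $\sup_{\alpha\in[0,1]}B_1(\alpha)\overset{d}{=}|B_1(1)|\sim|\mathscr{N}(0,1)|$, hence $F(B)\overset{d}{=}|\mathscr{N}(0,\hf)|$. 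The Gaussian integral in the statement is the corresponding tail probability, via $\P(|\mathscr{N}(0,\hf)|\geq u/\sqrt 2)=2\int_u^\infty e^{-x^2/2}/\sqrt{2\pi}\,dx$.

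I then need to relate $F(Z^{(T)})$ to the maximum appearing on the left-hand side of (\ref{eqn:maxPr}). Under the substitution $\sigma=\hf+(\log T)^{-\alpha}$,
\begin{equation*}
F(Z^{(T)})=\frac{1}{\sqrt{\log\log T}}\max_{\sigma\in[\hf+(\log T)^{-1},\,\hf+1]}\log|\zeta(\sigma+i\tau)|,
\end{equation*}
so it remains to show this differs from $\frac{1}{\sqrt{\log\log T}}\max_{\sigma\geq\hf}\log|\zeta(\sigma+i\tau)|$ by $o_P(1)$. The contribution from $\sigma\geq 3/2$ is immediately $O(1)/\sqrt{\log\log T}$ via the deterministic bound $|\zeta(\sigma+i\tau)|\leq\zeta(3/2)$.

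The hard part, and the main technical obstacle, is to control the remaining thin strip $\sigma\in[\hf,\hf+(\log T)^{-1}]$. My plan is to exploit the extension in Remark \ref{exttoline}: for any fixed $A\geq 1$, rerunning the continuous mapping argument on $Z^{(T)}|_{[0,A]}$ (whose limit is the process $\tilde B$, constant on $[1,A]$) shows that the normalised maximum over the enlarged range $\sigma\in[\hf+(\log T)^{-A},\hf+1]$ still converges in law to $|\mathscr{N}(0,\hf)|$. This reduces matters to bounding the oscillation of $\log|\zeta(\sigma+i\tau)|$ on the remaining strip $\sigma\in[\hf,\hf+(\log T)^{-A}]$ by $o_P(\sqrt{\log\log T})$, with $A=A(T)\to\infty$ sufficiently slowly. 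I would control this oscillation via $\int_{\hf}^{\hf+(\log T)^{-A}}\re(\zeta'/\zeta(u+i\tau))\,du$: the Hadamard product expansion writes $\re(\zeta'/\zeta(\sigma+i\tau))$ as a sum $\sum_\rho(\sigma-\beta)/((\sigma-\beta)^2+(\tau-\gamma)^2)$ over nontrivial zeros $\rho=\beta+i\gamma$ plus an $O(\log\tau)$ smooth remainder, and standard zero-density plus a second moment in $\tau$ should produce an integral of size $O_P(1)$ over this very thin strip, which is amply $o_P(\sqrt{\log\log T})$. Combining the three steps delivers (\ref{eqn:maxPr}) and hence the corollary.
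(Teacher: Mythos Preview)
Your argument is correct and matches the paper's strategy: continuous mapping theorem plus the reflection principle, with the trivial bound $|\zeta(\sigma+it)|\leq\zeta(3/2)$ disposing of $\sigma\geq 3/2$. The only difference is in how the thin strip near $\sigma=\hf$ is handled. The paper applies Remark \ref{exttoline} in full strength: convergence holds on $\mathcal{C}^0([0,\infty),\C)$ for the topology of \emph{uniform} convergence, and since $\alpha\in[0,\infty)$ parametrises all of $\sigma\in(\hf,\tfrac{3}{2}]$, the supremum functional over $[0,\infty)$ already captures the whole range --- no thin strip is left over. You instead use Remark \ref{exttoline} only on compact $[0,A]$ and then bound the oscillation on $[\hf,\hf+(\log T)^{-A}]$ by a direct $\zeta'/\zeta$ estimate via the Hadamard expansion; that estimate is essentially equation (\ref{logder}) and Proposition \ref{useabove}, which is precisely the input the paper uses to \emph{justify} the uniform-topology extension in Remark \ref{exttoline} in the first place. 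So you are re-deriving part of that remark rather than invoking it.

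One minor point: letting $A=A(T)\to\infty$ muddles the two limits and is unnecessary. Any \emph{fixed} $A>1$ already works: on the good event $A_\eps^T$ one has $|\zeta'/\zeta(\sigma+i\tau)|\ll_\eps\log T$ uniformly for $\sigma$ near $\hf$ (cf.\ (\ref{logder})), so the oscillation over the strip of width $(\log T)^{-A}$ is $\ll_\eps(\log T)^{1-A}=o(1)$; combined with the convergence of the max over $[0,A]$ to $|\mathscr{N}(0,\hf)|$, Slutsky's lemma finishes.
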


\begin{proof}
    It follows from Remark \ref{exttoline}, and the reflection principle for Brownian motion (see \cite[Proposition III.3.7]{revuzyor}), that
    \begin{equation}
        \sup_{\sigma \in [\tfrac{1}{2}, \tfrac{3}{2}]} \frac{1}{\sqrt{\log \log T}} \log |\zeta(\sigma + i \tau)| \underset{law}{\longrightarrow} |\mathscr{N}(0, \frac{1}{2})|.
    \end{equation}
    Furthermore, if $\sigma \geq 3/2$, $|\zeta(\sigma + i \tau)| \leq 2$; this proves the corollary.
\end{proof}

The next result gives the distribution of the logarithmic measure of $\sigma$ such that $\log|\zeta(\sigma+ i\tau)|\geq 0$. 
\begin{corollary}[Arcsine law]
    For $T > 10$, define the probability measure $\mu_T$ over $\left[\dfrac{1}{\log T}, 1\right]$ by

    \begin{equation}
        d \mu_T (\eta) = \dfrac{1}{\eta\log \log T}d \eta.
    \end{equation}
    Then, if $\tau \in [0, T]$ is chosen uniformly, the distribution of
$$M_T = \mu_T \{ \eta \in [\dfrac{1}{\log T}, 1], |\zeta(\hf + \eta + i \tau)| \geq 1 \}$$
converges weakly to an arcsine law i.e. $\Proba(M_T \leq y) \underset{T \to \infty}{\longrightarrow} \dfrac{2}{\pi} \arcsin{\sqrt{y}}$.
\end{corollary}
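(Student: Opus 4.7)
The plan is to reduce the claim to Lévy's classical arcsine law for Brownian motion via a logarithmic change of variables combined with Theorem \ref{mainthm}. First I would perform the substitution $\alpha = -\log\sigma/\log\log T$, so that $\sigma = (\log T)^{-\alpha}$ and $\alpha\in[0,1]$ corresponds bijectively to $\sigma\in[(\log T)^{-1},1]$. A direct calculation shows $d\sigma/(\sigma\log\log T) = -d\alpha$, so this substitution pushes Lebesgue measure on $[0,1]$ forward onto $\mu_T$; consequently
\begin{equation}
    M_T \;=\; \text{Leb}\big\{\alpha\in[0,1]:\re Z^{(T)}(\alpha)\geq 0\big\}\;=\;F(Z^{(T)}),
\end{equation}
where $F: \mathcal{C}^0([0,1],\C)\to[0,1]$ is the functional $F(f) = \text{Leb}\{\alpha : \re f(\alpha)\geq 0\}$.

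Next, I would combine $Z^{(T)}\Rightarrow B$ (from Theorem \ref{mainthm}) with the continuous mapping theorem. The functional $F$ is not sup-norm continuous everywhere on $\mathcal C^0$, but a simple $\varepsilon$-argument shows it is continuous at every $f$ for which $\{\re f = 0\}$ is Lebesgue-negligible. For standard Brownian motion, $\E[\text{Leb}\{\alpha : B_1(\alpha) = 0\}] = \int_0^1 \P(B_1(\alpha)=0)\,d\alpha = 0$ by Fubini, so $F$ is a.s.\ continuous at $B$, and the mapping theorem yields $M_T \Rightarrow F(B) = \text{Leb}\{\alpha\in[0,1]:B_1(\alpha)\geq 0\}$, using that $\re B = \tfrac{1}{\sqrt 2}B_1$ has the same sign as $B_1$.

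The final step is to recognise that $F(B)$ is arcsine-distributed: this is Lévy's classical arcsine law (\cite[Theorem III.4.3]{revuzyor}), which directly gives $\P(M_T \leq y) \to \tfrac{2}{\pi}\arcsin\sqrt{y}$. The only mildly delicate point of the argument is the continuity step for the mapping theorem, since $F$ does have sup-norm discontinuities on $\mathcal C^0$; this is handled by showing $B$ almost surely lies in the set of continuity points of $F$, as above. Everything else is a routine change of variables and an application of classical distributional identities, and Theorem \ref{mainthm} does all the real work.
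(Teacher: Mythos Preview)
Your proposal is correct and follows essentially the same approach as the paper: the same logarithmic change of variables $\sigma=(\log T)^{-\alpha}$ reducing $M_T$ to the occupation time of $\re Z^{(T)}$ in $[0,\infty)$, followed by Theorem~\ref{mainthm} and L\'evy's arcsine law. You are in fact more careful than the paper, which simply asserts that $\tilde M_T$ converges to the arcsine law without justifying the continuous mapping step; your explicit treatment of the a.s.\ continuity of $F$ at $B$ fills a gap the paper leaves implicit.
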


\begin{proof}
    Applying the arcsine law for Brownian motion (see \cite[Theorem VI.2.7]{revuzyor}), we know that 
    \begin{equation}
        \tilde{M_T} = \lambda \{ \alpha \in [0, 1], \log |\zeta(\hf + \frac{1}{(\log T)^\alpha} + i \tau)| \geq 0 \}
    \end{equation}
    converges weakly to an arcsine law. However, if $\alpha \in [0, 1]$ is chosen uniformly, $\dfrac{1}{(\log T)^\alpha}$ follows the law $\mu_T$; the corollary follows.
\end{proof}

A third consequence of our theorem is a result quantifying how abnormally large the maximum around $\tfrac{1}{2}+ i\tau$ (approached horizontally) becomes.

\begin{corollary}[Law of the iterated logarithm]
    If $\alpha \in [0,1]$, set
    \begin{equation}
        S_t(\alpha) = \sup_{0 \leq \beta \leq \alpha} \left|\log |\zeta(\hf + \frac{1}{(\log T)^{\beta}} + i t)|\right|.
    \end{equation}
    Then, if $\tau \in [0, T]$ is chosen uniformly at random, the sequence of functions $\left(\dfrac{S_{\tau}}{\sqrt{\log \log T}}\right)_{T \geq 10}$ converges in distribution to a random function $S$ verifying 

    \begin{equation} \label{itlogformula}
        \lim_{\alpha \to 0} \frac{S(\alpha)}{\sqrt{\alpha \log \log \frac{1}{\alpha}}} = 1 \text{ a.s.}
    \end{equation}

\end{corollary}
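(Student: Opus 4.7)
The plan is to realise $S_\tau/\sqrt{\log\log T}$ as a continuous functional of $Z^{(T)}$, apply Theorem \ref{mainthm} together with the continuous mapping theorem to identify the limit process, and then invoke the classical law of the iterated logarithm at the origin.

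First, since $\log|\zeta|=\re\log\zeta$, for every $\alpha\in[0,1]$ one has
\[
\frac{S_\tau(\alpha)}{\sqrt{\log\log T}}\;=\;\sup_{0\le\beta\le\alpha}\bigl|\re Z^{(T)}(\beta)\bigr|\;=\;\Phi\pb{Z^{(T)}}(\alpha),
\]
where $\Phi:\mathcal C^0([0,1],\C)\to\mathcal C^0([0,1],\R)$ is defined by $\Phi(f)(\alpha):=\sup_{0\le\beta\le\alpha}|\re f(\beta)|$. Combining $\bigl||x|-|y|\bigr|\le|x-y|$ with the standard inequality $|\sup A-\sup B|\le\sup|A-B|$ shows that $\Phi$ is $1$-Lipschitz for the uniform norm, and (because $\re f$ is continuous) it does take values in $\mathcal C^0([0,1],\R)$. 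Theorem \ref{mainthm} asserts $Z^{(T)}\overset{\text{law}}{\to}B$ in $\mathcal C^0([0,1],\C)$, so the continuous mapping theorem gives
\[
\frac{S_\tau}{\sqrt{\log\log T}}\;\overset{\text{law}}{\longrightarrow}\;S,\qquad S(\alpha)\;:=\;\sup_{0\le\beta\le\alpha}\abs{\re B(\beta)}.
\]

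Next I would identify $S$ explicitly. Since $B=\tfrac{1}{\sqrt 2}(B_1+\ii B_2)$ with $B_1$ a standard real Brownian motion, $\re B=B_1/\sqrt 2$, and hence
\[
S(\alpha)\;=\;\frac{1}{\sqrt 2}\sup_{0\le\beta\le\alpha}\abs{B_1(\beta)}.
\]
Applying the classical law of the iterated logarithm at the origin (see e.g.~\cite{revuzyor}) to $B_1$ and to its running absolute supremum yields, almost surely,
\[
\lim_{\alpha\to 0^+}\frac{\sup_{0\le\beta\le\alpha}\abs{B_1(\beta)}}{\sqrt{2\alpha\log\log\tfrac{1}{\alpha}}}\;=\;1.
\]
The lower bound is immediate from $\sup_{\beta\le\alpha}\abs{B_1(\beta)}\ge\abs{B_1(\alpha)}$ combined with the usual LIL for $B_1$; the upper bound follows on a geometric grid from the reflection identity $\sup_{\beta\le\alpha}B_1(\beta)\stackrel{d}{=}\abs{B_1(\alpha)}$ together with a Borel--Cantelli estimate based on the Gaussian tail. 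Dividing by $\sqrt 2$ produces exactly (\ref{itlogformula}).

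Altogether, the proof is a continuous-mapping reduction of the number-theoretic question to a purely probabilistic one; the whole arithmetic input is absorbed into Theorem \ref{mainthm}, and the only remaining technicality is the well-known LIL upper bound for the running supremum of a real Brownian motion, which is independent of $\zeta$.
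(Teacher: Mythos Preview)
Your argument is the paper's argument: recognise $S_\tau/\sqrt{\log\log T}$ as a continuous (indeed Lipschitz) functional of $Z^{(T)}$, push Theorem~\ref{mainthm} through the continuous mapping theorem, and then quote the LIL for Brownian motion at the origin. You are in fact slightly more careful than the paper, which in its proof writes the limit as $S(\alpha)=\sup_{\beta\le\alpha}\re B_\beta$ without the absolute value that the definition of $S_t$ carries.

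One point worth flagging, though it concerns the \emph{statement} rather than your strategy: your sketch of the LIL step only yields
\[
\limsup_{\alpha\to 0}\frac{S(\alpha)}{\sqrt{\alpha\log\log(1/\alpha)}}=1,
\]
not a genuine limit. The inequality $\sup_{\beta\le\alpha}|B_1(\beta)|\ge|B_1(\alpha)|$ gives $\limsup\ge 1$, not $\liminf\ge 1$; and in fact the $\liminf$ equals $0$ almost surely: by Brownian scaling $\P\bigl(\sup_{\beta\le\alpha}|B_1(\beta)|\le\epsilon\sqrt{2\alpha\log\log(1/\alpha)}\bigr)=\P\bigl(\sup_{\beta\le 1}|B_1(\beta)|\le\epsilon\sqrt{2\log\log(1/\alpha)}\bigr)\to 1$ as $\alpha\to 0$, so along any sequence $\alpha_n\downarrow 0$ this event occurs infinitely often. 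Thus (\ref{itlogformula}) should read $\limsup$ rather than $\lim$; with that amendment your proof and the paper's coincide, and the paper's own appeal to the LIL has the same lacuna.
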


This originates from the so-called iterated logarithm law for Brownian motion, which follows from a similar law for random walks \cite{itlog}. The usual statement is that, for a real Wiener process $(X_t)_{t \geq 0}$,

\begin{equation}
    \limsup_{t \to + \infty} \frac{X_t}{\sqrt{2t \log \log t}} = 1 \text{ a.s.}
\end{equation}
but, noting that $(t \mapsto t X_{\frac{1}{t}})$ is also a Wiener process, an analogous law follows for $t \to 0$.

\begin{proof}
    Let $F: \mathcal{C}^0([0,1], \R) \to \R$ be a continuous bounded functional. The functional
    \begin{displaymath}
G:
\left.
  \begin{array}{rcl}
    \mathcal{C}^0([0,1], \C) & \longrightarrow &\R \\
    Z & \longmapsto & F\left(\left( \alpha \mapsto \ds\sup_{0 \leq \beta \leq \alpha} \Rea  Z(\beta)\right)\right)\\
  \end{array}
\right.
\end{displaymath}
is also continuous and bounded. Thus,

\begin{equation}
    \begin{split}
        \E\left[F \left(\frac{1}{\sqrt{\log \log T}}S_{\tau}\right)\right] & = \E[G(Z^{(T)})] \underset{T \to +\infty}{\longrightarrow} \E[G(B)] = \E[F(S)],
    \end{split}
\end{equation}
where $B$ is a complex Brownian motion and $S(\alpha) = \ds\sup_{0\leq \beta \leq \alpha}\Rea B_\beta$.

Then, $(\ref{itlogformula})$ follows from the aforementioned iterated logarithm law for Brownian motion.
\end{proof}

Another corollary of our functional convergence is the limiting occupation measure for $\log |\zeta|$, on horizontal lines. More precisely, 
 for $t > 0$, let $L^{(T)}_t$ be the local time of $\Rea Z^{(T)}$ i.e. the (almost everywhere) unique function $L_t : \R \rightarrow \R_+$ such that, for any function $\phe \in \mathcal{C}^0(\R, \R)$,
    \begin{equation}
        \int_{0}^t \phe(\Rea Z^{(T)}(u)) du = \int_{\R} \phe(v) L_t(v) dv.
    \end{equation}

\begin{corollary}
\label{localtime}
The process $L^{(T)}_t$ converges weakly to the local time $L^{(\infty)}_t$ of Brownian motion, in the following sense: if $f, \phe \in C_b^0(\R)$ are bounded continuous functions,
    \begin{equation}
        \E[f(\langle L_t^{(T)}, \phe \rangle)] \underset{T \to \infty}{\longrightarrow} \E[f(\langle L_t^{(\infty)}, \phe \rangle)].
    \end{equation}
\end{corollary}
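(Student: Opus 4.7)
The plan is to reduce this corollary directly to Theorem \ref{mainthm} via the occupation times formula, which is essentially the \emph{defining} relation of the local time $L_t^{(T)}$. Indeed, testing the occupation formula against $\varphi$ itself gives
\begin{equation}
\langle L_t^{(T)}, \varphi \rangle = \int_\R \varphi(v) L_t^{(T)}(v) \, dv = \int_0^t \varphi\bigl(\Rea Z^{(T)}(u)\bigr) \, du,
\end{equation}
so the quantity we must control is simply a functional of the path of $\Rea Z^{(T)}$.

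The next step is to observe that, for fixed $t \in [0,1]$ and fixed continuous bounded $\varphi$, the map
\begin{displaymath}
G_{\varphi,t}:
\left.
\begin{array}{rcl}
\mathcal{C}^0([0,1], \C) & \longrightarrow & \R \\
Z & \longmapsto & \ds\int_0^t \varphi\bigl(\Rea Z(u)\bigr) \, du
\end{array}
\right.
\end{displaymath}
is continuous with respect to the uniform topology and bounded by $t \|\varphi\|_\infty$. Continuity follows because if $Z_n \to Z$ uniformly on $[0,1]$, then $\varphi(\Rea Z_n(u)) \to \varphi(\Rea Z(u))$ pointwise by continuity of $\varphi$ and is uniformly bounded, so dominated convergence applies. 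Composing with the continuous bounded function $f$, the functional $f \circ G_{\varphi,t}$ is itself continuous and bounded on $\mathcal{C}^0([0,1],\C)$.

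The conclusion is then immediate from Theorem \ref{mainthm}: since $Z^{(T)} \to B$ in law in $\mathcal{C}^0([0,1],\C)$,
\begin{equation}
\E\bigl[f(\langle L_t^{(T)}, \varphi \rangle)\bigr] = \E\bigl[(f \circ G_{\varphi,t})(Z^{(T)})\bigr] \underset{T \to \infty}{\longrightarrow} \E\bigl[(f \circ G_{\varphi,t})(B)\bigr] = \E\bigl[f(\langle L_t^{(\infty)}, \varphi \rangle)\bigr],
\end{equation}
where the last identity is again the occupation times formula applied to the real part of the limit Brownian motion. For $t > 1$ one invokes Remark \ref{exttoline} to extend $Z^{(T)}$ to $[0,\infty)$ and runs the same argument; the topology of uniform convergence on compacts still makes $G_{\varphi,t}$ continuous. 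There is no real obstacle here --- the only subtle point is that the occupation formula must hold for \emph{all} continuous test functions simultaneously, which is built into its definition, so no regularity of $L_t^{(T)}$ beyond integrability is needed for this weak statement.
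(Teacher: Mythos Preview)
Your proof is correct and follows essentially the same route as the paper's own argument: rewrite $\langle L_t^{(T)},\varphi\rangle$ via the occupation times formula as $\int_0^t \varphi(\Rea Z^{(T)}(u))\,du$, observe this is a continuous bounded functional of the path, and apply Theorem~\ref{mainthm}. Your version is slightly more detailed in justifying continuity and in handling $t>1$ via Remark~\ref{exttoline}, but the substance is identical.
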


\begin{proof}

    Let $\phe \in \mathcal{C}^0_b(\R)$. Then
\begin{multline*}
        \E \left[ f \left(\int_{\R} \phe(v) L_t^{(T)}(v) dv \right) \right] = \E \left[f \left(  \int_0^t \phe \circ \Rea Z^{(T)} (u) du \right)\right] 
         \underset{T \to \infty}{\longrightarrow} \E \left[ f \left( \int_0^t \phe (B_u)du \right) \right]\\
         = \E \left[ f \left( \int_{\R} \phe(v) L^{(\infty)}_t(v) dv \right) \right],
\end{multline*}
as desired.
\end{proof}

This result grants us insight into the distribution of values of $\log \zeta$. For instance:

\begin{corollary}
    Let $N > 0, \eps > 0$. Then, there exists $T_0(N, \eps) > 0$ such that, if $T \geq T_0$ and $\tau \in [0, T]$ is uniformly chosen, 
    \begin{equation}
        \Proba(\log |\zeta( \sigma + i\tau)| \text{ changes sign at least N times for } \sigma \in [\tfrac{1}{2},\tfrac{3}{2}]) \geq 1 - \eps.
    \end{equation}
\end{corollary}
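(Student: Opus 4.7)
The plan is to reduce the corollary, via Theorem \ref{mainthm}, to a well-known sample-path property of real Brownian motion started at the origin. Parametrize $\sigma \in [\tfrac{1}{2} + (\log T)^{-1}, \tfrac{3}{2}]$ by $\sigma = \tfrac{1}{2} + (\log T)^{-\alpha}$ with $\alpha \in [0,1]$; then $\re Z^{(T)}(\alpha)$ is a positive multiple of $\log\abs{\zeta(\sigma + i\tau)}$, so sign changes of the latter on this sub-interval of $[\tfrac{1}{2}, \tfrac{3}{2}]$ are in bijection with sign changes of $\re Z^{(T)}$ on $[0,1]$. It therefore suffices to prove
\[
\Proba\pb{\re Z^{(T)} \text{ has at least } N \text{ sign changes on } [0,1]} \underset{T\to\infty}{\longrightarrow} 1.
\]

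To apply the weak convergence, introduce the set
\[
A_N = \hb{f \in \mathcal{C}^0([0,1], \R) : \exists\, 0 \leq t_0 < \cdots < t_N \leq 1, \ f(t_{i-1}) f(t_i) < 0 \text{ for } i=1, \ldots, N}.
\]
The set $A_N$ is open for the uniform norm: a witnessing tuple $(t_i, f(t_i))_{i=0}^N$ satisfies $m \deq \min_i \abs{f(t_i)} > 0$, and any $g$ with $\norm{f - g}_\infty < m$ inherits the same strict sign pattern at the $t_i$, hence also lies in $A_N$. Composing Theorem \ref{mainthm} with the continuous projection $f \mapsto \re f$ gives $\re Z^{(T)} \Rightarrow \re B$ in $\mathcal{C}^0([0,1], \R)$, and the portmanteau theorem applied to the open set $A_N$ yields
\[
\liminf_{T\to\infty} \Proba\pb{\re Z^{(T)} \in A_N} \geq \Proba\pb{\re B \in A_N}.
\]

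It then remains to check that $\Proba(\re B \in A_N) = 1$ for every $N$. Since $\re B = \tfrac{1}{\sqrt{2}} B_1$ is a standard real Brownian motion started at $0$, this is the classical fact that such a Brownian motion has infinitely many sign changes in every right neighbourhood $(0, \epsilon]$ of the origin — a consequence of Blumenthal's zero-one law, or equivalently of recurrence applied to the time-inverted process $t \mapsto t B_{1/t}$. This forces $\liminf_T \Proba(\re Z^{(T)} \in A_N) = 1$, producing the threshold $T_0(N, \epsilon)$.

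A minor technicality is that $\log\abs{\zeta(\sigma + i\tau)}$ equals $-\infty$ when $\zeta$ vanishes on the segment $\{\sigma + i\tau : \sigma \in [\tfrac{1}{2}, \tfrac{3}{2}]\}$; but the zero set of $\zeta$ is discrete, so the set of offending $\tau$ has Lebesgue measure zero in $[0,T]$ and is harmless. Beyond this bookkeeping there is no genuine obstacle: the entire corollary is a soft consequence of the functional convergence already established in Theorem \ref{mainthm}, with all the real work lying in that theorem.
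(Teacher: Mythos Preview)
Your proof is correct and takes a cleaner route than the paper's. The paper proves this corollary by passing through the local time result (Corollary~\ref{localtime}): it tests the occupation measure of $\Rea Z^{(T)}$ on $[0,\eta]$ against $\phe(x)=x^+\wedge 1$ and $\phe(x)=x^-\wedge 1$, applies Portmanteau to conclude that $\Rea Z^{(T)}$ takes both signs on $[0,\eta]$ with high probability, and then invokes a (rather compressed) ``monotone convergence'' step to upgrade one sign change on each arbitrarily short interval $[0,\eta]$ to $N$ sign changes on $[0,1]$.

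You bypass the local time machinery entirely: you identify the set $A_N$ of continuous functions admitting $N+1$ points with strictly alternating signs, check directly that it is open for the sup norm, and apply Portmanteau straight to the functional convergence $\Rea Z^{(T)}\Rightarrow \Rea B$ of Theorem~\ref{mainthm}. The fact that $\Proba(\Rea B\in A_N)=1$ is exactly the classical Blumenthal zero--one law statement about infinitely many sign changes near the origin. This is more elementary (no occupation measures needed), handles all $N$ at once without any iteration or monotone limit, and makes the role of the openness hypothesis in Portmanteau completely transparent. The paper's route has the mild advantage of illustrating Corollary~\ref{localtime} in action, but as a proof of the sign-change statement yours is the more direct argument.
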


\begin{proof}
    By monotone convergence, it is sufficient to show that, for any $\eps > 0$, $\eta > 0$, there exists $T_0(\eta, \eps)$ such that, for $T \geq T_0$, 
    \begin{equation}
    \label{useltimelemma}
        \Proba(\Rea Z^{(T)} \text{ changes sign over }[0, \eta]) \geq 1-\eps.
    \end{equation}
    To show this, consider $\phe(x) = x^+ \land 1$. By Portmanteau's theorem,
    \begin{equation}
        \liminf_{T \to \infty} \Proba(\langle L_\eta^{(T)}, \phe \rangle > 0) \geq \Proba(\langle L^{(\infty)}_{\eta}, \phe \rangle > 0) = 1
    \end{equation}
    and so, for large enough $T$, $\Rea Z^{(T)} > 0$ at some point in $[0, \eta]$ with probability at least $1 - \eps$. 
    The same argument applied to $\phe(x) = x^- \land 1$ shows (\ref{useltimelemma}), and the corollary.
\end{proof}

\subsection{Notations and acknowledgments}\ 
We use the convention $f \ll g$ to mean $f = O(g)$; if the implied constant depends upon another variable $\eps$, we shall write $f \ll_\eps g$. $x \land y$ denotes the minimum of $x$ and $y$, and $x^+ = \max(x, 0)$ is the positive part of $x$. $\log \zeta$ is defined in the usual way (see \cite{Sel1946} for example).

The author wishes to thank Paul Bourgade for his many helpful comments.

\section{Overview of the proof of Theorem \ref{mainthm}}
We must prove two points in order to establish convergence in distribution:
(i)
    convergence of finite-dimensional distributions;
(ii)  tightness of our process $Z^{(T)}$.

\subsection{Convergence of finite-dimensional distributions.}\ 
In this section, we prove the following theorem, as a first step towards establishing Theorem \ref{mainthm}.  For complex variables, we define $ \Cov(X,Y)=\mathbb{E}[{\bar X}Y]-\mathbb{E}[{\bar X}]\mathbb{E}[Y]$

\begin{theorem}
\label{fidi}
    Let $0 \leq \alpha_1, \ldots, \alpha_n$: for $T > 0$, if $\tau$ is a uniform random variable on $[0, T]$,

    \begin{equation}
    \frac{1}{\sqrt{\log \log T}}\left(\log \zeta(\hf + \dfrac{1}{(\log T)^{\alpha_1}} + i \tau  ), \ldots, \log \zeta(\hf + \dfrac{1}{(\log T)^{\alpha_n}} + i \tau) \right)
\end{equation}
converges in law to a complex centred Gaussian vector $(Y_1, \ldots, Y_n)$, with covariances

    \begin{equation}
        \Cov(Y_i, Y_j) = 1 \land \alpha_i \land \alpha_j.
    \end{equation}
\end{theorem}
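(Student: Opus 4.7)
The plan is to follow the Selberg-style approach, as the authors note is a repurposing of \cite{bourgade}. The three moves are: approximate each $\log \zeta(\hf + (\log T)^{-\alpha_k} + i\tau)$ by a short Dirichlet polynomial over primes, extract the covariance from that polynomial, and apply the method of moments. The first move is standard: via Selberg mollifiers (or, for the real part of $\log \zeta$, the simpler approach of \cite{RadSou2017}), one obtains, for $\sigma \in [\hf + (\log T)^{-1}, 2]$ and $X$ a slowly growing power of $T$,
\begin{equation*}
\log \zeta(\sigma + it) = \sum_{p \leq X} \frac{1}{p^{\sigma + it}} + E_{\sigma,X}(t),
\end{equation*}
with $\frac{1}{T}\int_0^T |E_{\sigma,X}(t)|^2\, dt = o(\log \log T)$ outside an exceptional set of $\tau$ of measure $o(T)$. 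Since we only look at finitely many shifts $\sigma_k = \hf + (\log T)^{-\alpha_k}$, the union of the exceptional sets is still negligible, reducing matters to joint Gaussian convergence of the Dirichlet polynomials $D_k^{(T)} = (\log \log T)^{-1/2}\sum_{p \leq X} p^{-\sigma_k - i\tau}$.

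Next I would compute the covariance matrix. The near-orthogonality of $\{p^{-i\tau}\}_p$ under uniform $\tau \in [0,T]$ (namely $\frac{1}{T}\int_0^T (p/q)^{-i\tau}\, d\tau = O(1/(T\log(p/q)))$ for distinct $p, q \leq X$ with $X^2 \ll T$) makes the leading contribution diagonal:
\begin{equation*}
\Cov(D_i^{(T)}, D_j^{(T)}) \sim \frac{1}{\log \log T}\sum_{p \leq X}\frac{1}{p^{\sigma_i + \sigma_j}}.
\end{equation*}
By Mertens' theorem, this is $\sim \alpha_i \wedge \alpha_j \wedge 1$: when $\alpha_i \wedge \alpha_j < 1$ the tail controls the sum, giving $\log(1/((\sigma_i + \sigma_j) - 1)) \sim (\alpha_i \wedge \alpha_j)\log\log T$; when both $\alpha_i, \alpha_j \geq 1$ the shifts lie within $O(1/\log T)$ of $\hf$ and the truncation at $X$ saturates the sum at $\sum_{p \leq X} 1/p \sim \log \log T$. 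The method of moments then closes the argument: expanding $\E\big[\prod_k (D_k^{(T)})^{a_k} \overline{D_k^{(T)}}^{b_k}\big]$ and again using the orthogonality of $\{p^{-i\tau}\}_p$, only the Wick-paired terms survive, matching the moments of the centred complex Gaussian vector with covariance $1 \wedge \alpha_i \wedge \alpha_j$.

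The main obstacle is the approximation step when $\sigma - \hf$ is of order $(\log T)^{-1}$, i.e.\ when some $\alpha_k$ is close to $1$; there the Dirichlet polynomial approximation is most fragile and depends on discarding the set of $\tau$'s for which $\zeta$ has zeros very near $\hf + i\tau$. The remaining covariance computation and moment calculation are then routine prime-sum manipulations, and since a complex Gaussian is determined by its moments, this yields Theorem \ref{fidi}.
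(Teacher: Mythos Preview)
Your proposal is correct and takes essentially the same route as the paper: approximate $\log\zeta$ by the prime sum $\sum_{p\le T} p^{-\sigma-i\tau}$ with an error bounded in $L^2$, identify the covariance via $\sum_{p\le T} p^{-\sigma_i-\sigma_j}\sim(\alpha_i\wedge\alpha_j\wedge 1)\log\log T$, and deduce Gaussianity. The only cosmetic difference is that the paper uses the Cram\'er--Wold device to reduce to a one-dimensional CLT and then invokes a ready-made lemma from \cite{bourgade} rather than computing mixed moments directly; note also that no exceptional set is needed here since the $L^2$ bound on the approximation error is uniform for $\sigma\ge\hf$, so your stated ``main obstacle'' does not actually arise at this stage (it is only relevant for the tightness argument later in the paper).
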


In order to achieve this, we will make use of the following lemma from \cite{bourgade}:

\begin{lemma}
\label{fidilemma}
    Let $a_{p, T}$ be complex numbers indexed by prime $p$ and $T \geq 1$. Assume that:
\begin{enumerate}

    \item $\sup_{p} |a_{p, T}| \underset{T \to \infty}{\longrightarrow} 0$;
    \item $\sum_p |a_{p, T}|^2 \underset{T \to \infty}{\longrightarrow} a^2$ for some $a \geq 0$;
    \item there exists $(m_T)$ such that $\log m_T = o(\log T)$ and
    \begin{equation}
        \sum_{p > m_T} |a_{p, T}|^2 (1+\dfrac{p}{T})\underset{T \to \infty}{\longrightarrow} 0.
    \end{equation}
\end{enumerate}
Then, if $\tau \in [0, T]$ is a uniform random variable,
    $\sum_{p} a_{p, T} p^{-i \tau}$ converges in distribution to a complex normal variable $\mathscr{N}(0, a^2)$.
\end{lemma}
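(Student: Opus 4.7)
The plan is to use the method of moments. A complex Gaussian $\mathscr{N}_{\C}(0, a^2)$ is determined by its mixed moments $\E[X^k \bar{X}^\ell] = \delta_{k,\ell}\, k!\, a^{2k}$, so it suffices to show that for each fixed $k, \ell \geq 0$,
\[
\E\big[S_T^k\, \bar{S}_T^{\,\ell}\big] \underset{T\to\infty}{\longrightarrow} \delta_{k,\ell}\, k!\, a^{2k}, \qquad S_T := \sum_p a_{p,T}\, p^{-i\tau}.
\]
Truncation comes first: write $S_T = A_T + B_T$, where $A_T = \sum_{p \leq m_T} a_{p,T} p^{-i\tau}$ and $B_T = \sum_{p > m_T} a_{p,T} p^{-i\tau}$. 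By the Montgomery--Vaughan mean value estimate applied to the Dirichlet polynomial $B_T$,
\[
\E\big[|B_T|^2\big] = \sum_{p > m_T} |a_{p,T}|^2 + O\bigg(\frac{1}{T} \sum_{p > m_T} p\, |a_{p,T}|^2\bigg),
\]
which vanishes by hypothesis (iii); hence $B_T \to 0$ in $L^2$, and by Slutsky's lemma it suffices to analyze $A_T$.

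For $A_T$ I would expand the moments as
\[
\E\big[A_T^k\, \bar{A}_T^{\,\ell}\big] = \sum_{p_i, q_j \leq m_T} \Big( \prod_{i=1}^k a_{p_i,T} \Big) \Big( \prod_{j=1}^\ell \bar{a}_{q_j,T} \Big)\, \E\bigg[\bigg(\frac{p_1 \cdots p_k}{q_1 \cdots q_\ell}\bigg)^{\!\!-i\tau}\bigg]
\]
and use $\E[r^{-i\tau}] = \1_{r=1} + O\big(\frac{1}{T |\log r|} \1_{r \neq 1}\big)$. By unique factorization, the diagonal $r=1$ forces $k = \ell$ and the tuples $(p_i)$, $(q_j)$ to be permutations of the same multiset; after discarding multiplicity corrections (controlled by $(\sup_p |a_{p,T}|)^{2} (\sum_p |a_{p,T}|^2)^{k-1} \to 0$ via (i) and (ii)), the diagonal contributes $k!\, (\sum_{p\leq m_T} |a_{p,T}|^2)^k \to k!\, a^{2k}$ when $k = \ell$, and $0$ otherwise. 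For the off-diagonal $r \neq 1$, since all primes are $\leq m_T$, one has $|\log r| \geq c\, m_T^{-(k+\ell)}$ and the number of tuples is $O(m_T^{k+\ell})$, yielding a total bound $O(m_T^{2(k+\ell)}/T)$, which vanishes by $\log m_T = o(\log T)$.

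The main obstacle I expect is the tail estimate $\E[|B_T|^2] \to 0$. A naive combination of $|\E[r^{-i\tau}]| \leq \frac{C}{T|\log(p/q)|}$ with the elementary bound $|\log(p/q)| \gtrsim 1/\max(p,q)$ for distinct primes does not close after applying Cauchy--Schwarz, because the resulting double sum over primes is of the wrong shape. The cleanest way around this is Montgomery--Vaughan's inequality (a large-sieve type estimate that exploits the near-orthogonality of $\{p^{-it}\}_p$ via the well-spacing of $\{\log p\}_p$), and the factor $(1 + p/T)$ in hypothesis (iii) is tailored precisely to make its output a genuine $o(1)$. Once the tail is controlled, the remaining moment calculation is a routine combinatorial argument.
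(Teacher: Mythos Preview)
The paper does not prove this lemma at all: it is quoted verbatim from \cite{bourgade} and used as a black box in the finite-dimensional convergence argument, so there is no ``paper's own proof'' to compare against.

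That said, your proof sketch is correct and is essentially the standard argument (and presumably the one given in \cite{bourgade}). The logic is sound: truncate at $m_T$, kill the tail $B_T$ in $L^2$ via Montgomery--Vaughan (this is exactly what the weight $(1+p/T)$ in hypothesis (iii) is designed for), then compute the mixed moments of the short Dirichlet polynomial $A_T$ by expanding and separating diagonal from off-diagonal. Your handling of the diagonal is right: unique factorization forces $k=\ell$ and a matching of the two prime tuples, with the repeated-prime corrections controlled by $(\sup_p|a_{p,T}|)^2\cdot(\sum_p|a_{p,T}|^2)^{k-1}\to 0$. For the off-diagonal, your bound $m_T^{2(k+\ell)}/T\to 0$ is correct once one notes that $\log m_T=o(\log T)$ gives $m_T^{c}=T^{o(1)}$ for any fixed $c$; you are implicitly bounding each coefficient by $\sup_p|a_{p,T}|\leq 1$ for large $T$ and the number of tuples by $\pi(m_T)^{k+\ell}\leq m_T^{k+\ell}$, which is crude but sufficient. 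One small presentational point: you open by saying it suffices to compute moments of $S_T$, then invoke Slutsky to pass to $A_T$ --- the cleaner order is to first reduce to $A_T$ via Slutsky (convergence in probability of $B_T$ plus convergence in law of $A_T$), and only then run the moment method on $A_T$, since the complex Gaussian is moment-determined. But this is cosmetic; the argument is complete.
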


In order to make use of this, we shall replace $\log \zeta$ with a related Dirichlet series. Indeed, it is known that, if $\sigma \geq \hf$,

\begin{equation}
    \log \zeta (\sigma + i \tau) - \sum_{p \leq T} \frac{1}{p^{\sigma + i \tau}}
\end{equation}
is bounded in $\mathcal{L}^2$ uniformly in $T$, and thus converges in distribution to $0$ once divided by $\sqrt{\log \log T}$. Here, we shall take 
\begin{equation}
    \sigma_i = \hf + \frac{1}{(\log T)^{\alpha_i}} \text{ for }1 \leq i \leq n.
\end{equation}
Using the Cramér–Wold method, in order to show Theorem \ref{fidi}, it is thus sufficient to show that for all $\mu_1, \ldots, \mu_n \in \C$,
\begin{equation}
    \frac{1}{\sqrt{\log \log T}}\sum_{l=1}^n \mu_l \log \zeta(\sigma_l + i \tau) \overset{\text{law}}{\longrightarrow} \mathscr{N}(0, a^2)
\end{equation}
\begin{equation*}
    \text{i.e. } \frac{1}{\sqrt{\log \log T}} \sum_{l = 1}^n \mu_l \sum_{p \leq T} \frac{1}{p^{\sigma_l + i \tau}}  \overset{\text{law}}{\longrightarrow} \mathscr{N}(0, a^2)
\end{equation*}
where $a^2 = \dsum_{1 \leq i, j \leq n} \mu_i \overline{\mu_j} (1 \land \alpha_i \land \alpha_j)$. Now, setting
\begin{equation}
    a_{p, T} = \dfrac{\mathbbm{1}_{p \leq T}}{\sqrt{\log \log T}} \sum_{l = 1}^n \dfrac{\mu_l}{p^{\sigma_l}}
\end{equation}
we simply need to check that the prerequisites of Lemma \ref{fidilemma} hold. $(i)$ is clearly true; $(iii)$ holds if we set $m_T = T^{\frac{1}{\log \log T}}$. Finally, in order to check $(ii)$, we just need to show
\begin{equation}
    \sum_{p \leq T} \dfrac{1}{p^{\sigma_i + \sigma_j}} \sim (1 \land\alpha_i \land \alpha_j) \log \log T \text{ for any }1 \leq i, j \leq n.
\end{equation}
This is shown in Lemma 3.3 of \cite{bourgade}, and we skip the proof here: it is similar to the proof of our Lemma \ref{tech} later on. All prerequisites having been checked, Theorem \ref{fidi} is therefore proven.

\subsection{The tightness criterion.}\ 
Before proving the tightness of $Z^{(T)}$, let us state a criterion which will be crucial to the proof. It is a modified version of a statement by Prokhorov \cite[Theorem 2.1]{prokh}, which itself is adapted from a criterion by Kolmogorov for the continuity of stochastic processes.

\begin{theorem}[Kolmogorov tightness criterion]
\label{kolmo}
Let $\{Z^{(T)}, T \geq 0\}$ be a sequence of stochastic processes on $\mathcal{C}([0, 1], \R)$. Assume that, for some $T_0 \geq 0$,
    \begin{itemize}
        \item $\{Z^{(T)}(x_0), T \geq T_0\}$ is tight for some $x_0$;
        \item For any $\eps > 0$, there exist events $A_\eps^T$ of probability at least $1 - \eps$, and constants  $A \geq 0, B > 1$ such that for all $T \geq T_0$ and $0 \leq a, b \leq 1$,
\begin{equation}
\label{prereq}
    \E[|Z^{(T)}(a) - Z^{(T)}(b)|^A \mathbbm{1}_{A_\eps^T}] \ll_{\eps} |a - b|^B.
\end{equation}
\end{itemize}
    Then the sequence $\{Z^{(T)}, T \geq 0\}$ is tight.
\end{theorem}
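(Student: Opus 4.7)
The plan is to deploy the standard dyadic chaining behind Kolmogorov's continuity theorem, carefully restricted to the favourable events $A_\eps^T$ so that the moment bound (\ref{prereq}) is applied where it holds and the uncontrolled region only contributes $\eps$. By the Arzel\`a--Ascoli characterization of compact subsets of $\mathcal{C}([0,1], \R)$ together with Prokhorov's theorem, tightness of $\{Z^{(T)}\}$ is equivalent to the pointwise tightness assumption at $x_0$ combined with control of the modulus of continuity: for every $\eta > 0$,
\[
    \lim_{\delta \to 0} \limsup_{T \to \infty} \P\big(\omega(Z^{(T)}, \delta) > \eta\big) = 0, \qquad \omega(f, \delta) := \sup_{|a - b| \leq \delta} |f(a) - f(b)|.
\]
I would therefore reduce the whole statement to establishing this modulus-of-continuity bound.

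Fix $\eps > 0$, let $D_k = \{i 2^{-k} : 0 \leq i \leq 2^k\}$, and apply Markov's inequality to (\ref{prereq}): for any $\gamma > 0$, $T \geq T_0$ and $1 \leq i \leq 2^k$,
\[
    \P\Big(A_\eps^T \cap \big\{|Z^{(T)}(i2^{-k}) - Z^{(T)}((i-1)2^{-k})| > 2^{-\gamma k}\big\}\Big) \ll_\eps 2^{-(B - \gamma A)k}.
\]
A union bound over the $2^k$ increments at each level $k$, and then over $k \geq K$, yields
\[
    \P\bigg(A_\eps^T \cap \bigcup_{k \geq K}\big\{\exists\, i:\ |Z^{(T)}(i2^{-k}) - Z^{(T)}((i-1)2^{-k})| > 2^{-\gamma k}\big\}\bigg) \ll_\eps \sum_{k \geq K} 2^{-(B - \gamma A - 1)k}.
\]
The hypothesis $B > 1$ is exactly what lets me pick $\gamma \in (0, (B-1)/A)$ (any $\gamma > 0$ if $A = 0$) so that the exponent is strictly positive and the sum is a convergent geometric series vanishing as $K \to \infty$.

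On the complement of the union above, a classical telescoping argument --- each dyadic rational is reached from a coarser one by at most one nearest-neighbour jump per level --- bounds, for all $a, b \in \bigcup_k D_k$ with $|a - b| \leq 2^{-K}$,
\[
    |Z^{(T)}(a) - Z^{(T)}(b)| \leq 2 \sum_{k \geq K} 2^{-\gamma k} \leq C\, 2^{-\gamma K},
\]
and pathwise continuity of $Z^{(T)}$ extends this to arbitrary $a, b \in [0,1]$. Given $\eta > 0$, I pick $K$ with $C\, 2^{-\gamma K} < \eta$ and set $\delta = 2^{-K}$; then for every $T \geq T_0$,
\[
    \P\big(\omega(Z^{(T)}, \delta) > \eta\big) \leq \P\big((A_\eps^T)^c\big) + C_\eps \sum_{k \geq K} 2^{-(B - \gamma A - 1)k} \leq \eps + o_K(1).
\]
Sending $K \to \infty$ and then $\eps \to 0$ delivers the desired modulus-of-continuity bound, while the finitely many remaining $T < T_0$ are tight because each $Z^{(T)}$ is almost surely continuous.

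The only delicate point is synchronising three small parameters: $\gamma$ must lie below $(B-1)/A$ so that the dyadic sum converges (which is precisely where the hypothesis $B > 1$ enters), $K$ must be large enough for the telescoped tail to drop below $\eta$, and the uncontrolled event $(A_\eps^T)^c$ costs a harmless $\eps$. Beyond this bookkeeping, the only probabilistic inputs are Markov's inequality and a geometric series, exactly as in the textbook Kolmogorov argument; the sole novelty lies in replacing a uniform moment bound with one conditional on high-probability events.
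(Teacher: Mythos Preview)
Your proof is correct and follows essentially the same approach as the paper: both reduce the indicator $\mathbbm{1}_{A_\eps^T}$ to an $\eps$-loss in probability and then run the standard Kolmogorov dyadic chaining, choosing $\gamma < (B-1)/A$ so that the geometric series over levels converges. The paper phrases the conclusion via compactness of H\"older balls while you phrase it via the Arzel\`a--Ascoli modulus-of-continuity criterion, but this is only a cosmetic difference.
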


The original statement of this theorem does not include $\mathbbm{1}_{A_\eps^T}$; this is a relatively minor change, but we nevertheless include a proof for completeness.

\begin{proof}
     Setting $\eps > 0$, we must show that for large enough $T$, $Z^{(T)}$ stays in a compact subset of $\mathcal{C}^0([0, 1])$ with probability at least $1 - \eps$. Replacing $\eps$ by $2 \eps$, we may replace $Z^{(T)}$ by $Z^{(T)}\mathbbm{1}_{A_\eps^T}$, and so our main hypothesis becomes
\begin{equation}
    \forall x, y \in [0, 1], \E[|Z^{(T)}(x)-Z^{(T)}(y)|^A ] \leq C|x-y|^B.
\end{equation}
If $0 < \gamma < 1$, we are going to bound the $\gamma$-Hölder norm of $Z^{(T)}$:
\begin{equation}
    \| \phe \|_\gamma = |\phe(x_0)| + \sup_{0 \leq x, y \leq 1} \frac{|\phe(x) - \phe(y)|}{|x-y|^\gamma}.
\end{equation}
In fact, it is sufficient to bound its restriction to dyadic intervals
\begin{equation}
    \| \phe \|_\gamma^\mathcal{D} = |\phe(x_0)| + \sup_{\underset{0 \leq k \leq 2^n - 1}{n \geq 1}} \frac{|\phe(\frac{k+1}{2^n}) - \phe(\frac{k}{2^n})|}{(\frac{1}{2^n})^\gamma}
\end{equation}
since $\| \phe \|_\gamma \leq 2(1-2^{-\gamma}) \| \phe \|_\gamma^\mathcal{D} $.
Accordingly, if $n > 0$, $0 \leq k < 2^n$, and $T > T_0, M > 0$,
\begin{equation}
            \Proba(|Z^{(T)}(\frac{k+1}{2^n}) - Z^{(T)}(\frac{k}{2^n})| > \frac{M}{2^{\gamma n}})  \leq \frac{2^{A \gamma n}}{M^A} \E[|Z^{(T)}(\frac{k+1}{2^n}) - Z^{(T)}(\frac{k}{2^n})|^A]
            \leq \frac{1}{M^A} 2^{(A \gamma - B)n}.
\end{equation}
Furthermore, we can find $M' > 0$ such that $\Proba(|Z^{(T)}(x_0)| > M') < \frac{\eps}{2}$. Thus, if we take $\gamma < \dfrac{B - 1}{A}$, we may sum over all dyadic numbers:
\begin{equation}
            \Proba(\| Z^{(T)} \|_\gamma^{\mathcal{D}} > M + M') \leq \dfrac{\eps}{2} + \sum_{k, n} \Proba(|Z^{(T)}(\frac{k+1}{2^n}) - Z^{(T)}(\frac{k}{2^n})| > \frac{M}{2^{\gamma n}}) 
             \leq \frac{1}{M^A} \frac{2^{1 + A \gamma - B}}{1 - 2^{1 + A \gamma - B}} + \dfrac{\eps}{2}.
\end{equation}
For large enough $M$, this is at most $\eps$. Thus, we have shown that $\|Z^{(T)}\|_\gamma$ is bounded by a certain constant with probability at least $1 - \eps$: since the unit ball for $\|\cdot \|_{\gamma}$ is compact, this concludes our proof.
\end{proof}
If we want to extend Theorem \ref{mainthm} to $a \in [0, \infty)$ (as in Remark \ref{exttoline}), the criterion above does not apply as-is. Instead, we will note that 

\begin{equation}
    \E[\sup_{a \geq 1} |Z^{(T)}(a) - Z^{(T)}(1)|^4 \mathbbm{1}_{A_\eps^T}] \ll_\eps \frac{1}{(\log \log T)^2}
\end{equation}

via a straightforward modification of (\ref{lasteq}). This automatically guarantees convergence in law to our process $(\tilde{B}_\alpha)$ defined in (\ref{defbtilde}).

\subsection{Structure of the proof of tightness}.\ 
Theorem \ref{kolmo}  allows us to transform a tightness problem, which would require some pretty strong uniform controls on $\log \zeta$, into a moments calculation for which we have much better tools. In our case, we shall take $A = 4$ and $B=2$, owing to the roughly $\hf$-Holderian behaviour of Brownian motion.

Are the prerequisites of Theorem \ref{kolmo} verified by the process $Z^{(T)}$? The first one clearly is (taking for instance $x_0 = 0$) but the second one is much less obvious, and the purpose of the rest of this paper will be to prove that it holds. Specifically, from now on, we will set $\eps > 0$, and $0 \leq a < b \leq 1$: for large enough $T > 0$, our aim is to construct an adequate $A_\eps^T$ (independent of $a, b$) such that
\begin{equation}
\label{momenttoshow}
    \E[|Z^{(T)}(a) - Z^{(T)}(b)|^4 \mathbbm{1}_{A_\eps^T}] \leq C_\eps |a - b|^2.
\end{equation}
We will also set
\begin{equation}
    \sigma_1 = \hf + \frac{1}{(\log T)^a} \text{ and } \sigma_2 = \hf + \dfrac{1}{(\log T)^b}
\end{equation}
so that (\ref{momenttoshow}) becomes
\begin{equation}
    \E [|\log \zeta(\sigma_1 + i \tau) - \log \zeta(\sigma_2 + i \tau)|^4 \mathbbm{1}_{A_\eps^T}] \leq C_\eps(b - a)^2(\log \log T)^2.
\end{equation}
In order to show this, we will proceed by approximating $\log \zeta$ by a well-chosen Dirichlet sum. This allows us to effectively compute moments by expanding out the sum. Specifically, we will make use of the following decomposition from Selberg's original paper on the CLT \cite{Sel1946}: for $x > 1$, we may write
\begin{equation}
    \dfrac{\zeta'}{\zeta}(s) = -\sum_{n \leq x^3} \frac{\Lambda_x(n)}{n^{s}} +  e_x(s)
\end{equation} 
where 
\begin{equation}
    \Lambda_x(n) = \begin{cases}
    \Lambda(n) & \text{ if } n \leq x \\
    \Lambda(n) \dfrac{\log^2 \frac{x^3}{n} - 2 \log^2 \frac{x^2}{n}}{2 \log^2 x} & \text{ if } x \leq n \leq x^2 \\
    \Lambda(n) \dfrac{\log^2 \frac{x^3}{n}}{2 \log^2 x} & \text{ if } x^2 \leq n \leq  x^3 
\end{cases}. 
\end{equation}

\noindent Here, we will be taking $x = T^{\frac{1}{20}}$, although all of the following results are valid for $x = T^{c}$ with small enough $c$. As a result,
\begin{equation}
\label{termstobound}
    \begin{split}
        \E[|Z^{(T)}(a) - Z^{(T)}(b)|^4 \mathbbm{1}_{A_\eps^T}] & \leq \dfrac{32}{(\log \log T)^2}\E \left[\left|\sum_{n \leq x^3} \frac{\Lambda_x(n)}{n^{\sigma_1 + i \tau} \log n} - \sum_{n \leq x^3} \frac{\Lambda_x(n)}{n^{\sigma_2 + i \tau} \log n} \right|^4 \right] \\
        & + \dfrac{32}{(\log \log T)^2} \E \left[\left|\int_{\sigma_2}^{\sigma_1}e_x(\sigma + i \tau) d \sigma \right|^4\mathbbm{1}_{A_\eps^T}\right].
    \end{split}
\end{equation}
The first term can be bounded quite effectively; this will be done in Section 4. To bound the second term, we will rely upon methods developed by Selberg in \cite{Sel1946}; this will be done in Section 5. In particular, the specific choice of $\Lambda_x$ was made in order to be able to apply said methods.  

With both terms bounded, we will mostly have proven our main theorem. However, setting $\sigma_c =  \dfrac{1}{2} + \dfrac{40 \log \frac{1}{\eps}}{\log T}$, it turns out that this method breaks down when $\sigma_2 \leq \sigma_c$. This case is not too complicated, and is handled separately at the end of the paper.

\section{Moments of Dirichlet sums}

As announced, in this section we will show the following proposition.

\begin{proposition}
\label{lemma}
Here and for the rest of this paper, we will take $x = T^{\frac{1}{20}}$. Then,
    \begin{equation}
        \E \left[\left|\sum_{n \leq x^3} \frac{\Lambda_x(n)}{n^{\sigma_1 + i \tau} \log n} - \sum_{n \leq x^3} \frac{\Lambda_x(n)}{n^{\sigma_2 + i \tau} \log n} \right|^4 \right] \ll (b-a)^2 (\log \log T)^2.
    \end{equation}
\end{proposition}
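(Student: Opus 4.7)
The plan is a fourth-moment calculation for a Dirichlet polynomial. Writing $D_j \deq \sum_{n \leq x^3} \Lambda_x(n)/(n^{\sigma_j+i\tau}\log n)$ for $j=1,2$, and
\[
g(n) \deq \frac{\Lambda_x(n)}{\log n}\pa{n^{-\sigma_1} - n^{-\sigma_2}},
\]
so that $D_1 - D_2 = \sum_n g(n) n^{-i\tau}$, I would expand $|D_1-D_2|^4$ as a quadruple sum and take expectation over $\tau \in [0,T]$, splitting the result into a diagonal part $\{n_1 n_2 = m_1 m_2\}$ and an oscillatory off-diagonal. Since $g$ is supported on prime powers $n \leq x^3 = T^{3/20}$, arithmetic structure (unique factorization, Mertens-type estimates) controls both.

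For the off-diagonal piece, the integrals $\frac{1}{T}\int_0^T (m_1 m_2/n_1 n_2)^{i\tau} d\tau$ are $O(T^{-7/10})$ whenever $n_1 n_2 \neq m_1 m_2$, since both products are integers in $[1,T^{3/10}]$ and hence have log-ratio at least $T^{-3/10}$. Combined with $\sum_n |g(n)| \ll (\sigma_1-\sigma_2)\, T^{3/40}$ (from $|n^{-\sigma_1}-n^{-\sigma_2}| \leq (\sigma_1-\sigma_2) \log n \cdot n^{-\sigma_2}$ and partial summation against PNT), the off-diagonal is $O(T^{-2/5})$, which is negligible relative to $(b-a)^2(\log\log T)^2$ provided $(b-a)^2(\log\log T)^2 \gg T^{-2/5}$.

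For the diagonal, unique factorization forces $n_1 n_2 = m_1 m_2$ (with prime-power indices) into either the generic case $\{n_1,n_2\}=\{m_1,m_2\}$ with $n_1,n_2$ powers of two distinct primes — bounded by $2\,(\sum_n |g(n)|^2)^2$ — or a same-prime case which, by trivial estimates, contributes only $O((\sigma_1-\sigma_2)^4)$. Using $(p^{-\sigma_1}-p^{-\sigma_2})^2 = p^{-2\sigma_1} - 2p^{-\sigma_1-\sigma_2} + p^{-2\sigma_2}$ together with Lemma 3.3 of \cite{bourgade} — giving $\sum_{p \leq x^3} p^{-2\sigma_i} \sim \alpha_i \log\log T$ and $\sum_{p \leq x^3} p^{-\sigma_1-\sigma_2} \sim (a \wedge b)\log\log T$ — I get $\sum_p |g(p)|^2 \sim (b-a)\log\log T$; contributions of prime powers $p^e$, $e \geq 2$, to $\sum_n |g(n)|^2$ are $O((\sigma_1-\sigma_2)^2)$ and absorb. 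Squaring yields the generic diagonal bound $\ll (b-a)^2(\log\log T)^2$.

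The main obstacle is the regime of very small $|b-a|$, where $(b-a)^2(\log\log T)^2 \leq T^{-2/5}$ and the off-diagonal error above is no longer small enough. To cover this, I would apply Cauchy--Schwarz to the integral representation
\[
D_1 - D_2 = -\int_{\sigma_2}^{\sigma_1}\sum_{n \leq x^3}\frac{\Lambda_x(n)}{n^{\sigma+i\tau}}\, d\sigma,
\]
obtaining
\[
\E|D_1-D_2|^4 \leq (\sigma_1-\sigma_2)^3 \int_{\sigma_2}^{\sigma_1} \E\absa{\sum_{n \leq x^3} \frac{\Lambda_x(n)}{n^{\sigma+i\tau}}}^4 d\sigma,
\]
and bound each inner fourth moment by $O((\log T)^{4b})$ via the same diagonal/off-diagonal method applied to the undivided sum. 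Using the mean-value estimate $\sigma_1-\sigma_2 \leq (b-a)(\log\log T)(\log T)^{-a}$, this yields $\E|D_1-D_2|^4 \ll (b-a)^4(\log\log T)^4(\log T)^{4(b-a)}$, which is $\ll (b-a)^2(\log\log T)^2$ whenever $(b-a)\log\log T = O(1)$, covering the remaining range.
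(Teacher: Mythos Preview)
Your overall strategy --- reduce the fourth moment to $\bigl(\sum_p (p^{-\sigma_1}-p^{-\sigma_2})^2\bigr)^2$ plus lower-order pieces --- matches the paper's, but the case split is mis-motivated and masks a real issue in the diagonal step.

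The off-diagonal is not the obstacle. Your own estimate gives $(\sum_n|g(n)|)^4\cdot T^{-7/10} \ll (\sigma_1-\sigma_2)^4\,T^{-2/5}$, not merely $T^{-2/5}$; keeping that prefactor and using $\sigma_1-\sigma_2 \le (b-a)(\log\log T)(\log T)^{-a}$ shows the off-diagonal is $\ll (b-a)^2(\log\log T)^2$ \emph{uniformly} in $a,b$. No case split is needed here.

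The genuine weak point is the diagonal. Deriving $\sum_p|g(p)|^2 \sim (b-a)\log\log T$ by subtracting three asymptotics from Lemma~3.3 of \cite{bourgade} is not uniform in $a,b$: each asymptotic $\sum_{p\le x^3} p^{-2\sigma_i} \sim \alpha_i\log\log T$ carries an additive $O(1)$ error, so after cancellation you only obtain $(b-a)\log\log T + O(1)$, which says nothing once $(b-a)\log\log T \ll 1$. It is a lucky accident that your Cauchy--Schwarz argument --- introduced to fix a nonexistent off-diagonal problem --- is valid precisely on the range $(b-a)\log\log T = O(1)$ and so happens to plug this gap. The paper instead proves directly (its Lemma~\ref{tech}) that $\sum_{p\le x^3}\bigl(p^{-1-\eta}-p^{-1-\eta'}\bigr) \ll (\beta-\alpha)\log\log T$ as a bound on the \emph{difference}, and packages the fourth-moment computation through the Montgomery--Vaughan mean-value theorem (its Lemma~\ref{lemmaphe}), which absorbs the off-diagonal automatically. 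This yields the proposition in one pass, with no regime split.
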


In order to prove Proposition \ref{lemma}, we shall use the two following technical lemmas:

\begin{lemma}
    Let $\phe : \N \rightarrow \R$ satisfy the following conditions:
    \label{lemmaphe}
    \begin{itemize}
        \item $\phe(n) = 0$ if $n$ is not a $p^k$ for some prime $p$ and $k > 0$;
        \item if $p$ is prime and $i \geq 1$, $|\phe(p^i)| \leq i |\phe(p)|$
    \end{itemize}
    Then,
\begin{equation}
    \E\left[ \left|\sum_{n \leq x^3} \frac{\phe(n)}{n^{\hf + i \tau}} \right|^4 \right] \ll \left( \sum_{p \leq x^3 \text{ prime}} \frac{\phe(p)^2}{p} \right)^2,
\end{equation}
    with the implied constant being independent of $\phe$.
\end{lemma}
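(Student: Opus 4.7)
The plan is to expand the fourth moment into a multiple Dirichlet sum and separate the diagonal (main) contribution from the off-diagonal (negligible) one. Writing $S(\tau) = \sum_{n\leq x^3} \phe(n) n^{-\hf - i\tau}$, I would start from
\[
\E[|S|^4] = \sum_{a, b, c, d \leq x^3} \frac{\phe(a)\phe(b)\phe(c)\phe(d)}{(abcd)^{1/2}}\cdot \frac{1}{T}\int_0^T \left(\frac{bd}{ac}\right)^{i\tau}\rd\tau.
\]
The inner integral equals $1$ when $ac=bd$; otherwise integration gives $\ll \max(ac, bd)/T \leq x^6/T = T^{-7/10}$, since $|\log(u/v)|\gg 1/\max(u,v)$ for distinct positive integers $u, v$.

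For the off-diagonal terms, the prime-power support and the hypothesis $\phe(p^k) \leq \phe(p)$ give
\[
\sum_{n \leq x^3} \frac{\phe(n)}{\sqrt n} \leq \sum_p \phe(p) \sum_{k \geq 1} p^{-k/2} \ll \sum_p \frac{\phe(p)}{\sqrt p} \ll x^{3/2}\left(\sum_p \frac{\phe(p)^2}{p}\right)^{1/2}
\]
by Cauchy--Schwarz over primes $\leq x^3$, so the off-diagonal contribution is at most $T^{-7/10} (\sum_{n} \phe(n)/\sqrt n)^4 \ll T^{-c}(\sum_p \phe(p)^2/p)^2$ for some $c>0$, which is negligible compared with the claim.

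The main task is the diagonal $ac=bd$ with each of $a, b, c, d$ a prime power. Unique factorisation forces one of two cases: (i) $a$ and $c$ are powers of distinct primes, in which case $\{b, d\} = \{a, c\}$ as multisets, yielding at most $2$ orderings; or (ii) $a$ and $c$ are powers of the same prime $p$, and then $b, d$ must also be powers of $p$ with $v_p(b) + v_p(d) = v_p(a)+ v_p(c)$. Case (i) contributes
\[
\leq 2 \left(\sum_{n = p^k \leq x^3} \frac{\phe(n)^2}{n}\right)^2 \leq 2 \left(\sum_p \phe(p)^2 \sum_{k \geq 1} p^{-k}\right)^2 \ll \left(\sum_p \frac{\phe(p)^2}{p}\right)^2,
\]
using $\phe(p^k)\leq \phe(p)$. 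For case (ii), the same hypothesis gives a bound $\sum_p \phe(p)^4 \sum_{s \geq 2}(s-1)^2 p^{-s} \ll \sum_p \phe(p)^4/p^2 \leq (\sum_p \phe(p)^2/p)^2$, where the last inequality uses $\sum_p a_p^2 \leq (\sum_p a_p)^2$ for non-negative $a_p$.

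The main obstacle is the combinatorial bookkeeping in the diagonal, specifically the ``same-prime'' case (ii): were $\phe$ supported only on squarefree integers, case (i) alone would arise and the bound would be essentially immediate, but to handle arbitrarily high powers of a single prime one must exploit the hypothesis $\phe(p^i) \leq \phe(p)$ in order to reduce everything to the target quantity $(\sum_p \phe(p)^2/p)^2$. The choice $x = T^{1/20}$ is comfortably in the range for which the off-diagonal error beats the diagonal main term.
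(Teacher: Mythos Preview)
Your proof is correct and follows essentially the same route as the paper: expand the fourth moment, isolate the diagonal $ac=bd$, and split the diagonal into the distinct-prime case and the same-prime case, bounding each by $(\sum_p \phe(p)^2/p)^2$ via the hypothesis $\phe(p^i)\leq\phe(p)$. The one real difference is in how the off-diagonal is dispatched. The paper first groups the square $S^2$ as $\sum_l \alpha_l\, l^{-i\tau}$ with $\alpha_l = l^{-1/2}\sum_{mn=l}\phe(m)\phe(n)$ and then applies the Montgomery--Vaughan mean-value theorem, which absorbs the off-diagonal into the $O(\delta^{-1})$ term and yields directly $\E|S|^4 \ll \sum_l l^{-1}\Phi_x(l)^2$. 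You instead integrate $(bd/ac)^{i\tau}$ elementarily and bound the resulting off-diagonal by $x^{12}/T \cdot (\sum_p\phe(p)^2/p)^2 = T^{-2/5}(\sum_p\phe(p)^2/p)^2$ using Cauchy--Schwarz. Your version is more self-contained (no appeal to Montgomery--Vaughan) at the price of having to bound the off-diagonal explicitly; the paper's version is cleaner but imports a sharper tool. The diagonal analysis is identical in both arguments.
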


\begin{lemma}
\label{tech}
    Let $0 \leq \alpha \leq \beta \leq 1$, be functions of $T$, and set  $\eta = (\log T)^{-\alpha}$, $\eta' = (\log T)^{-\beta}$. Then,
    \begin{equation}
            \sum_{p \leq x^3 \text{ prime}} \frac{1}{p^{1 + \eta}} - \frac{1}{p^{1 + \eta'}} \ll (\beta - \alpha)\log \log T.
    \end{equation}
\end{lemma}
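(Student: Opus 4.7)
The plan is to reduce the claim to a moment integral in an auxiliary parameter $s$ over the interval $[\eta',\eta]$, and then bound a prime sum by two complementary estimates that together give the right order. First, since $\partial_s\, p^{-1-s} = -p^{-1-s}\log p$, interchanging sum and integral yields
\begin{equation*}
\sum_{p \leq x^3} \left[\frac{1}{p^{1+\eta}} - \frac{1}{p^{1+\eta'}}\right] \;=\; -\int_{\eta'}^{\eta} \sum_{p \leq x^3} \frac{\log p}{p^{1+s}} \, ds,
\end{equation*}
where I use that $\alpha \leq \beta$ together with the asymptotics $\log\eta\sim -\alpha\log\log T$, $\log\eta'\sim -\beta\log\log T$ forces $\eta \geq \eta'$ for $T$ large enough, so the range is nonempty.

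Next, I would bound the inner sum uniformly in $s\in(0,1]$ in two complementary ways. The trivial bound $p^{-1-s}\leq p^{-1}$ combined with Mertens' second theorem gives
\begin{equation*}
\sum_{p \leq x^3} \frac{\log p}{p^{1+s}} \;\leq\; \sum_{p \leq x^3} \frac{\log p}{p} \;=\; \log x^3 + O(1) \;\ll\; \log T.
\end{equation*}
On the other hand, completing to a sum over all primes and comparing with $-\zeta'/\zeta(1+s) = \sum_n \Lambda(n)n^{-1-s}$ (the $k\geq 2$ prime-power contribution is bounded uniformly in $s>0$), together with the pole expansion $-\zeta'/\zeta(1+s) = 1/s + O(1)$ near $s=0$, gives
\begin{equation*}
\sum_{p \leq x^3} \frac{\log p}{p^{1+s}} \;\leq\; \sum_{p} \frac{\log p}{p^{1+s}} \;=\; \frac{1}{s} + O(1).
\end{equation*}
Combining these, $\sum_{p \leq x^3} \log p \cdot p^{-1-s} \ll \min(1/s,\, \log T)$.

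The final step is to integrate this bound over $[\eta',\eta]$. In the generic regime where $\beta<1$, we have $\eta' \gg 1/\log T$, so $\min = 1/s$ throughout the interval and
\begin{equation*}
\int_{\eta'}^{\eta} \frac{ds}{s} \;=\; \log\eta - \log\eta' \;\sim\; (\beta-\alpha)\log\log T
\end{equation*}
directly from the hypotheses. The main obstacle is the borderline case where $\alpha$ or $\beta$ equals $1$, which places $\eta$ or $\eta'$ at the scale $1/\log T$ where the two bounds cross over. Here I would split the integral at $1/\log T$: the portion $s\leq 1/\log T$ contributes at most $(1/\log T - \eta')\log T = O(1)$, while the portion $s \geq 1/\log T$ contributes at most $\log(\eta\log T) \leq (1-\alpha)\log\log T + O(1)$, which in this regime ($\beta=1$) is $(\beta-\alpha)\log\log T + O(1)$. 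The residual $O(1)$ is absorbed into the $\ll$ conclusion, consistent with the $o(\log\log T)$ slack present in the asymptotic hypotheses on $\eta,\eta'$.
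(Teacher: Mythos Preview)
Your argument is correct and considerably more direct than the paper's. The paper proceeds via Abel summation: it rewrites $\sum_{p\le x^3} p^{-1-\eta}$ in terms of the prime-counting function $\pi$, compares the resulting sum to an integral $I_x(\eta) = \int_1^{x^3} \pi(u)\bigl(u^{-1-\eta}-(u+1)^{-1-\eta}\bigr)\,du$, and then controls $I_x(\eta)-I_x(\eta')$ through a change of variable $v=\eta\log u$ together with repeated use of the mean-value estimate $\eta-\eta'\ll\eta(\beta-\alpha)\log\log T$. Your route bypasses all of this by differentiating in the exponent: the identity $p^{-1-\eta}-p^{-1-\eta'}=-\int_{\eta'}^{\eta}(\log p)\,p^{-1-s}\,ds$ reduces the problem to the single pointwise bound $\sum_{p\le x^3}(\log p)\,p^{-1-s}\ll\min(1/s,\log T)$, obtained from Mertens' theorem and the Laurent expansion of $-\zeta'/\zeta$ at $1$. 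The integral $\int_{\eta'}^{\eta} ds/s=\log(\eta/\eta')$ then reads off the factor $(\beta-\alpha)\log\log T$ directly from the hypotheses, with no need for the auxiliary estimate on $\eta-\eta'$. This is shorter and arguably more natural; the paper's approach trades this economy for a more explicit manipulation of $\pi(u)$.

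One minor remark: in your borderline treatment the residual is really $o(\log\log T)$ rather than $O(1)$ under the stated asymptotic hypotheses, since $\log\eta=-\alpha\log\log T+o(\log\log T)$. This is harmless for fixed $\alpha<\beta$ and matches the same looseness already present in the paper's own derivation.
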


These lemmas are very similar to existing results in the literature, but those are not quite sufficient for our purposes due to the dependence on $\sigma, \sigma'$.

Assuming these lemmas, we may set
\begin{equation}
    \phe(n) = \frac{\Lambda_x(n)}{\log n}(n^{-(\sigma_1 - \hf)} - n^{-(\sigma_2 - \hf)})
\end{equation}
and, applying Lemma \ref{lemmaphe},
\begin{equation}
    \begin{split}
        \E \left[\left|\sum_{n \leq x^3} \frac{\Lambda_x(n)}{n^{\sigma_1 + i \tau} \log n} - \sum_{n \leq x^3} \frac{\Lambda_x(n)}{n^{\sigma_2 + i \tau} \log n} \right|^4 \right] & \ll 
        \left( \sum_{p \leq x^3 \text{ prime}} (p^{-\sigma_1} - p^{-\sigma_2})^2 \right)^2 \\
    \end{split}
\end{equation}
\begin{equation*}
    = \left( \sum_{p \leq x^3} \left(\frac{1}{p^{2\sigma_1}} - \frac{1}{p^{\sigma_1 + \sigma_2}}\right) - \left( \frac{1}{p^{\sigma_1 + \sigma_2}} - \frac{1}{p^{2 \sigma_2}} \right) \right)^2.
\end{equation*}
Applying Lemma \ref{tech}, this shows Proposition \ref{lemma}.

\begin{proof}[Proof of Lemma \ref{lemmaphe}]
We may write
\begin{equation}
    \E\left[ \left|\sum_{n \leq x^3} \frac{\phe(n)}{n^{\hf + i \tau}} \right|^4 \right]   = \E\left[ \left|  \sum_{m, n \leq x^3} \dfrac{\phe(m)\phe(n)}{\sqrt{mn}} e^{-i\tau \log(mn)}\right|^2 \right]
 = \E \left[ \left|\sum_{l \geq 1} e^{-i \tau \log l}\sum_{\underset{mn = l}{m, n \leq x^3}} \dfrac{\phe(m)\phe(n)}{\sqrt{mn}} \right|^2 \right]
\end{equation}
in order to apply the following identity by Montgomery-Vaughan.

\begin{lemma}
    Let $\lambda_1, \ldots, \lambda_N \in \R$, $\alpha_1, \ldots, \alpha_N$ and set $\delta = \ds\min_{i, j \leq N}|\lambda_i - \lambda_j|$. Then,

    \begin{equation}
        \int_{0}^T \left| \sum_{k=1}^N \alpha_k e^{i \lambda_k t} \right|^2 dt = (T + O(\delta^{-1})) \sum_{k=1}^{N}|\alpha_k|^2.
    \end{equation}
\end{lemma}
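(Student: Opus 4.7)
My plan is to expand the squared modulus, integrate term by term, and then control the off-diagonal contribution via Hilbert's inequality. First I would write
\begin{equation*}
\Big|\sum_k \alpha_k e^{i\lambda_k t}\Big|^2 = \sum_{j,k} \alpha_j \overline{\alpha_k}\, e^{i(\lambda_j - \lambda_k) t}
\end{equation*}
and integrate over $[0, T]$. The diagonal $j=k$ produces the main term $T \sum_k |\alpha_k|^2$, while the off-diagonal terms combine into
\begin{equation*}
E = \sum_{j \neq k} \alpha_j \overline{\alpha_k}\,\frac{e^{iT(\lambda_j-\lambda_k)}-1}{i(\lambda_j-\lambda_k)}.
\end{equation*}
The goal reduces to the bound $|E| = O(\delta^{-1}) \sum_k |\alpha_k|^2$.

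For the second step I would split the numerator $e^{iT(\lambda_j-\lambda_k)}-1$ and write $E = E_1 - E_2$, where
\begin{equation*}
E_1 = \frac{1}{i}\sum_{j \neq k} \frac{(\alpha_j e^{iT\lambda_j})\,\overline{(\alpha_k e^{iT\lambda_k})}}{\lambda_j - \lambda_k}, \qquad E_2 = \frac{1}{i}\sum_{j \neq k} \frac{\alpha_j \overline{\alpha_k}}{\lambda_j - \lambda_k}.
\end{equation*}
Both are Hilbert-type sums, and the two coefficient sequences $(\alpha_k e^{iT\lambda_k})_k$ and $(\alpha_k)_k$ have the same $\ell^2$ norm. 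Invoking the classical Hilbert inequality --- that for distinct reals $\lambda_1,\dots,\lambda_N$ with $\delta = \min_{i\neq j} |\lambda_i - \lambda_j|$ and any complex $\beta_k$,
\begin{equation*}
\Big|\sum_{j \neq k} \frac{\beta_j \overline{\beta_k}}{\lambda_j - \lambda_k}\Big| \leq \pi\, \delta^{-1} \sum_k |\beta_k|^2
\end{equation*}
--- and applying it to each of $E_1, E_2$ then yields $|E| \leq 2\pi\, \delta^{-1} \sum_k |\alpha_k|^2$, which is exactly the $O(\delta^{-1})$ error term in the statement.

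The main obstacle is Hilbert's inequality itself: the termwise expansion is routine, but controlling the kernel $1/(\lambda_j-\lambda_k)$ as an $\ell^2$ bilinear form is genuinely nontrivial. A standard proof recognises the bilinear form as $\langle K\beta, \beta\rangle$ for an antisymmetric kernel and estimates its operator norm via a Fourier-analytic argument, or equivalently via Schur's test with a carefully chosen weight. The sharper Montgomery--Vaughan refinement replaces the uniform gap $\delta$ by the localised gap $\delta_k = \min_{j \neq k}|\lambda_j-\lambda_k|$, yielding $|E| \leq \tfrac{3\pi}{2}\sum_k |\alpha_k|^2/\delta_k$; but this refinement is not required for the weaker $O(\delta^{-1})$ form stated here, so I would simply cite Hilbert's inequality in its classical form.
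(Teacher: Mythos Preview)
Your argument is correct and is exactly the standard derivation: expand, separate diagonal from off-diagonal, and bound the two Hilbert-type sums $E_1,E_2$ by the generalized Hilbert inequality. The paper itself does not give a proof of this lemma but simply cites Montgomery--Vaughan \cite{hilbert}, where precisely this argument (and the sharper localised-gap refinement you mention) is carried out; so your proposal matches the cited source.
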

A proof of this can be found in \cite{hilbert}. In our case, we obtain
\begin{equation}
    \E\left[ \left|\sum_{n \leq x^3} \frac{\phe(n)}{n^{\hf + i \tau}} \right|^4 \right]  \ll \sum_{l \geq 1} \dfrac{1}{l} \left| \sum_{\underset{mn = l}{m, n \leq x^3}} {\phe(m)\phe(n)}\right|^2 
     \overset{\text{def}}{=} \sum_{l \geq 1}\frac{1}{l} \Phi_x(l)^2 
     \leq \sum_{p <  q \text{ prime}}  \sum_{1 \leq i, j} \frac{\Phi_x(p^iq^j)^2}{p^iq^j} + \sum_{i \geq 2} \sum_{p \text{ prime}} \frac{\Phi_x(p^i)^2}{p^i}.
\end{equation}
We separate these two terms in order to effectively bound $\Phi_x$ in each case. To bound the first term, expanding out $\Phi_x$,
\begin{equation}
       \sum_{p <  q \text{ prime}}  \sum_{1 \leq i, j} \frac{\Phi_x(p^iq^j)^2}{p^iq^j}  \leq \sum_{p, q \leq x^3 \text{ prime}} \sum_{1 \leq i, j} \frac{\phe(p^i)^2 \phe(q^j)^2}{p^iq^j}
 \ll \sum_{p, q \leq x^3 \text{ prime}} \frac{\phe(p)^2 \phe(q)^2}{pq}
 \ll \left( \sum_{p \leq x^3 \text{ prime}} \frac{\phe(p)^2}{p} \right)^2.
\end{equation}

We handle the second term in the same manner:
\begin{multline*}
        \sum_{i \geq 2} \sum_{p \text{ prime}} \frac{\Phi_x(p^i)^2}{p^i}  = \sum_{i \geq 2} \sum_{p \text{ prime}} \sum_{k+l = i, k'+l'=i} \frac{\phe(p^k)\phe(p^{k'})\phe(p^l)\phe(p^{l'})}{p^i} 
         \leq \sum_{p \text{ prime}} \phe(p)^4 \sum_{i \geq 2} \frac{i^6}{p^i} \\
         \ll \sum_{p \text{ prime}} \frac{\phe(p)^4}{p^2} \leq \left( \sum_{p \leq x^3 \text{ prime}} \frac{\phe(p)^2}{p} \right)^2.
    \end{multline*}
This concludes the proof of Lemma \ref{lemmaphe}.
\end{proof}
We now just have to prove our other technical result.

\begin{proof}[Proof of Lemma \ref{tech}]
First, note that by taking the derivative of $\alpha \mapsto \dfrac{1}{(\log T)^\alpha}$, we obtain

\begin{equation}
    \label{iaf}
    \eta - \eta' \ll \eta (\beta - \alpha) \log \log T.
\end{equation}
We shall be using equation (\ref{iaf}) throughout the rest of this paper.
Let us rewrite:
\begin{equation}
        \sum_{p \leq x^3} \frac{1}{p^{1 + \eta}}  = \sum_{n \leq x^3} \frac{\pi(n) - \pi(n-1)}{n^{1+\eta}} 
 = \sum_{n \leq x^3} \pi(n) (\frac{1}{n^{1 + \eta}} - \frac{1}{(n+1)^{1 + \eta}}) + \frac{\pi(N)}{N^{1+\eta}}.
\end{equation}
where $N = \lfloor x^3 \rfloor$, and $\pi$ denotes the prime-counting function. We now wish to approximate this sum by its associated integral. Specifically, setting

\begin{equation}
    I_x(\eta) = \int_1^{x^3} \pi(u)(\frac{1}{u^{1+\eta}} - \frac{1}{(u+1)^{1 + \eta}})du,
\end{equation}
we will split up our problem:
\begin{equation}
    \sum_{p \leq x^3} \frac{1}{p^{1 + \eta}} - \sum_{p \leq x^3} \frac{1}{p^{1 + \eta'}} = \left(I_x(\eta')  - \sum_{p \leq x^3} \frac{1}{p^{1 + \eta'}} \right) - \left(I_x(\eta)  - \sum_{p \leq x^3} \frac{1}{p^{1 + \eta}} \right) + (I_x(\eta) - I_x(\eta')).
\end{equation}
Now,
\begin{equation}
    \begin{split}
        \dfrac{d}{d\eta} & \left(I_x(\eta)  - \sum_{p \leq x^3} \frac{1}{p^{1 + \eta}} \right) \\
        & = (1 + \eta) \left( \int_{1}^{x^3}\pi(u) (-\frac{\log u}{u^{1 + \eta}} + \frac{\log \lfloor u \rfloor}{\lfloor u\rfloor^{1 + \eta}} + \frac{\log (u+1)}{(u+1)^{1+\eta}} - \frac{\log \lfloor u+1 \rfloor}{\lfloor u+1\rfloor ^{1+\eta}}) du - \frac{\pi(N) \log N}{N^{1 + \eta}} \right) \\
        & \ll \int_{1}^{x^3}\pi(u) \frac{\log u}{u^{3+\eta}}du + \frac{\pi(N)\log N}{N^{1+\eta}} \ll 1
    \end{split}
\end{equation}
since $\pi(u) \sim \dfrac{u}{\log u}$. As a result,
\begin{equation}
    \left(I_x(\eta)  - \sum_{p \leq x^3} \frac{1}{p^{1 + \eta}} \right) - \left(I_x(\eta')  - \sum_{p \leq x^3} \frac{1}{p^{1 + \eta'}} \right) \ll (\eta - \eta') \ll (\beta - \alpha) \log \log T
\end{equation}
by (\ref{iaf}). As a result, we now just need to control $I_x(\eta) - I_x(\eta')$:
\begin{equation}
\begin{split}
    I_x(\eta) - I_x(\eta') & \ll \int_{2}^{x^3} \dfrac{u}{\log u}\left(\frac{1}{u^{1 + \eta}} - \frac{1}{(u+1)^{1 + \eta}} - \frac{1}{u^{1 + \eta'}} + \frac{1}{(u+1)^{1 + \eta'}} \right) du \\
    & = \int_2^{x^3} \left(\frac{\eta}{u^{1 + \eta} \log u} - \frac{\eta'}{u^{1 + \eta'}\log u} \right) du + \int_2^{x^3} \eps(u, \eta) - \eps(u, \eta')du
\end{split}
\end{equation}

with $\eps(u, \eta) = \dfrac{1}{u^{\eta} \log u} - \dfrac{u}{(u+1)^{1 + \eta} \log u} - \dfrac{\eta}{u^{1 + \eta} \log u}$. Since $\dfrac{d}{d \eta} \eps(u, \eta) \ll \dfrac{1}{u^{2 + \eta}}$,

\begin{equation}
    \int_2^{x^3} (\eps(u, \eta) - \eps(u, \eta'))du \ll \eta - \eta' \ll (\beta - \alpha) \log \log T.
\end{equation}
In order to bound the main integral, we may now set $v = \eta \log u$ (resp. $v = \eta' \log u$):
\begin{equation}
    \begin{split}
        \int_2^{x^3} & \left(\frac{\eta}{u^{1 + \eta} \log u} - \frac{\eta'}{u^{1 + \eta'}\log u} \right) du  = \eta \int_{\eta \log 2}^{3 \eta \log x} \frac{dv}{v e^v} - \eta' \int_{\eta' \log 2}^{3\eta' \log x} \frac{dv}{v e^v} \\
        & = (\eta - \eta') \int_{\eta \log 2}^{3 \eta' \log x} \dfrac{dv}{v e^v} + \eta \int_{3 \eta' \log x}^{3 \eta \log x}\dfrac{dv}{v e^v} - \eta' \int_{\eta' \log 2}^{\eta \log 2} \dfrac{dv}{v e^v} \\
        & = I_1 + I_2 - I_3.
    \end{split}
\end{equation}
It is quite clear that $I_2 \ll I_3$; meanwhile,
\begin{equation}
    I_3 = \eta' \int_{\eta' \log 2}^{\eta \log 2} \frac{1}{v}(1 + O(1))dv \ll \eta' \log \frac{\eta}{\eta'} + (\eta - \eta') \ll (\beta - \alpha) \log \log T.
\end{equation}
Finally,
\begin{equation}
        I_1  \ll (\eta - \eta') \left( \int_{\eta \log 2}^1 \dfrac{dv}{v} + \int_{1}^{3 \eta' \log x} e^{-v} dv \right) 
\ll \eta (1 + \log \eta)(\beta - \alpha) \log \log T \ll (\beta - \alpha)(\log \log T),
\end{equation}
which concludes the proof.
\end{proof}

\section{The contribution of zeta zeroes}

As a reminder, we have set $\sigma_c = \Hf + \dfrac{40 \log \frac{1}{\eps}}{\log T}$. We still have two points to handle in order to apply Theorem \ref{kolmo}:
(i) bounding the second term in (\ref{termstobound});
(ii) handling the case $\sigma_2 \leq \sigma_c$.

In the interests of legibility, we will define (and work with) $\eta_1 = \sigma_1 - \Hf$, $\eta_2 = \sigma_2 - \Hf$, $\eta_c = \sigma_c - \Hf$, etc.

\subsection{Bounding the error $e_x$. }

Recall that we set

\begin{equation}
    e_x(s) = \frac{\zeta'}{\zeta}(s) + \sum_{n \leq x^3} \frac{\Lambda_x(n)}{n^s}.
\end{equation}
We wish to show that
\begin{equation}
    \label{toshow}
   \E \left[\left|\int_{\sigma_2}^{\sigma_1}e_x(\sigma + i \tau) d \sigma \right|^4\mathbbm{1}_{A_\eps^T}\right] \ll_\eps (b-a)^2(\log \log T)^2
\end{equation}
when $\sigma_c \leq \sigma_2 \leq \sigma_1$. Our main tool for showing this will be the following identity from \cite[equation (4.9)]{Sel1946}

\begin{lemma}
\label{selblem}
    Let $t \geq 2$ and $2 \leq x \leq t^2$.

Furthermore, set

$$\sigma_{x, t} = \Hf + 2 \max_\rho (\beta , \frac{2}{\log x})$$

where $\rho = \Hf + \beta + i \gamma$ ranges over all zeroes of $\zeta$ such that $|t - \gamma| \leq \dfrac{x^{3|\beta|}}{\log x}$.
Then, if $\sigma \geq \sigma_{x, t}$,

\begin{equation}
    e_x(\sigma + i t) \ll  x^{- \frac{1}{2}(\sigma - \hf)}  \left( \left| \sum_{n \leq x^3} \frac{\Lambda_x(n)}{n^{\sigma_{x, t} + it}} \right| + \log t \right).
\end{equation}

\end{lemma}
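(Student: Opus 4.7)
The plan is to follow Selberg's strategy from \cite{Sel1946}, which hinges on a Mellin-type integral representation. The starting point is the identity
\[
-\sum_{n \leq x^3}\frac{\Lambda_x(n)}{n^s} = \frac{1}{2\pi \ii \log^2 x}\int_{(c)}\frac{\zeta'}{\zeta}(s+w)\, K(w)\, dw,\qquad K(w) \deq \frac{x^{3w} - 2x^{2w} + x^w}{w^2},
\]
valid for $c$ sufficiently large; this identity is equivalent to (and in fact motivates) the precise triangular shape of the weights $\Lambda_x$. The kernel $K(w)$ is entire with $K(0) = \log^2 x$, so the $w=0$ residue in any contour shift recovers a full copy of $\zeta'/\zeta(s)$.

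The first step is to shift the contour from $\re w = c$ to $\re w = -c'$ with $c'$ large, collecting residues along the way. At $w=0$ the residue contributes $\zeta'(s)/\zeta(s)$, which when combined with the left-hand side produces exactly $e_x(s)$. At each nontrivial zero $\rho$ one picks up $K(\rho - s)/\log^2 x$; the simple pole of $\zeta$ at $s+w=1$ and the trivial zeros contribute negligibly in the range of $s$ of interest (bounded by $O(\log t)$ after the shifted integral is estimated using convexity bounds for $\zeta'/\zeta$). This yields the explicit formula
\[
e_x(s) = -\frac{1}{\log^2 x}\sum_\rho K(\rho - s) + O(\log t),
\]
which is the workhorse of the argument.

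Next I would split the zero sum according to the defining condition of $\sigma_{x,t}$: call a zero $\rho = \tfrac{1}{2} + \beta + \ii\gamma$ \emph{close} if $|\gamma - t| \leq x^{3|\beta|}/\log x$, and \emph{far} otherwise. For $\sigma \geq \sigma_{x,t}$, the condition $\sigma_{x,t} \geq \tfrac{1}{2} + 2\max(\beta, 2/\log x)$ forces $\re(\rho - s) \leq -\tfrac{1}{2}(\sigma - \tfrac{1}{2})$, so $|x^{j(\rho - s)}| \leq x^{-j(\sigma - 1/2)/2}$ for $j=1,2,3$; every close-zero term thus inherits the decay factor $x^{-(\sigma - 1/2)/2}$. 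Summing the close-zero contributions and re-expressing them via the \emph{same} Mellin identity applied at the shifted point $\sigma_{x,t} + \ii t$ identifies their total as a constant multiple of the Dirichlet sum $\left|\sum_n \Lambda_x(n)/n^{\sigma_{x,t} + \ii t}\right|$. The far zeros are handled by the Riemann--von Mangoldt bound $N(T+1) - N(T) \ll \log T$ together with the off-diagonal decay $|K(\rho - s)| \ll x^{-3(\sigma - 1/2)/2}/|\gamma - t|^2$, which absorbs into the $\log t$ error upon summation.

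The main obstacle is the delicate bookkeeping needed to equate the close-zero portion of the sum with the Dirichlet sum at $\sigma_{x,t} + \ii t$: the definition of $\sigma_{x,t}$ is tailored precisely so that applying the explicit formula at this shifted point inverts the zero-sum versus Dirichlet-sum relation cleanly, and tracking the constants through the zero-density arguments so that the overall saving matches exactly $x^{-(\sigma - 1/2)/2}$ (and not a smaller power) is where the argument requires the most care.
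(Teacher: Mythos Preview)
The paper does not prove this lemma; it is simply quoted as equation~(4.9) of Selberg~\cite{Sel1946}. Your sketch follows the correct architecture of Selberg's original argument, but there is one technical slip and one genuine gap.

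The slip: with the $\log^2$ weights defining $\Lambda_x$, the kernel must be $K(w)=x^w(x^w-1)^2/w^3$, not $\ldots/w^2$. With your denominator the induced weight on $\Lambda(n)$ for $n\le x$ works out to $\log(x^3/n)-2\log(x^2/n)+\log(x/n)=0$, so the displayed identity is false as written. The correct $K$ has a \emph{simple pole} at $w=0$ with residue $\log^2 x$, not a removable singularity; it is this pole that produces $\zeta'/\zeta(s)$ in the contour shift.

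The gap concerns how the Dirichlet polynomial at $s'=\sigma_{x,t}+\ii t$ enters. Re-invoking the Mellin identity at $s'$ would bound $\bigl|\sum_\rho K(\rho-s')\bigr|$, but after taking absolute values termwise and extracting the saving $x^{-(\sigma-1/2)/2}$ you are left with $\sum_\rho|K(\rho-s')|$, and the triangle inequality runs the wrong way. Selberg's route is different: each close-zero term is bounded by $O\bigl(x^{-(\sigma-1/2)/2}/|s'-\rho|^3\bigr)$, one factor of $|s'-\rho|$ is traded for $(\sigma_{x,t}-\tfrac12)^{-1}$, and the remaining $\sum_\rho 1/|s'-\rho|^2$ is controlled through the Hadamard positivity identity $\sum_\rho(\sigma_{x,t}-\re\rho)/|s'-\rho|^2=-\re\,\zeta'/\zeta(s')+O(\log t)$. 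A short bootstrap (applying the resulting inequality at $\sigma=\sigma_{x,t}$, where the prefactor $x^{-(\sigma_{x,t}-1/2)/2}\le e^{-2}$ lets the self-referential term be absorbed) then gives $|\zeta'/\zeta(s')|\ll\bigl|\sum_n\Lambda_x(n)n^{-s'}\bigr|+\log t$. So the operative device at this step is the partial-fraction positivity trick, not a second Mellin inversion; your final paragraph is right that the linkage is delicate, but the mechanism you name would not close the argument.
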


Applying this lemma, if $\sigma_{x, t} \leq \sigma_2 \leq \sigma_1$ and $T^{\frac{1}{2}} \leq t \leq T$,

\begin{equation}
    \begin{split}
        \left|\int_{\sigma_2}^{\sigma_1}e_x(\sigma + i t) d \sigma \right| 
        & \ll \left( \left|\sum_{n \leq x^3} \frac{\Lambda_x(n)}{n^{ \sigma_{x, t} + it}} \right| + \log T  \right) \min(\frac{x^{-\frac{\eta_1}{2}}}{\log x}, (\sigma_1 - \sigma_2) x^{-\frac{\eta_2}{2}}).
    \end{split}
\end{equation}

This is a good start, but how do we go from the condition $\sigma_2 \geq \sigma_{x, t}$ to $\sigma_2 \geq \sigma_c$? For this, we need to show that $\sigma_{x, t}$ is usually smaller than $\sigma_c$: we will then cut out the region where $\sigma_{x, t} \geq \sigma_c$ by choosing $A_\eps^T$ adequately. This is the object of the following lemma.
\begin{lemma}
\label{cutset}
Set
    \begin{equation}
    Y_{\eps} = \bigcup_{\rho} [\gamma - \eps\frac{x^{4 |\beta|}}{\log x}, \gamma + \eps\frac{x^{4|\beta|}}{\log x}]
\end{equation}

where $\rho = \hf + \beta + i \gamma$ ranges over non-trivial zeroes of $\zeta$, then:
\begin{itemize}
    \item the measure of $Y_{\eps} \cap [0, T]$ is $O(\eps T)$;
    \item if $t \in  [0, T] \setminus Y_\eps$, $\sigma_{x, t} \leq \sigma_c$.
\end{itemize}
\end{lemma}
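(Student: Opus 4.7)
The plan is to establish the two bullets separately: the first from a covering estimate combined with Selberg's zero density theorem, the second from a short algebraic comparison of the two inequalities imposed on $|t - \gamma|$.

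For the measure bound, I would start from
\begin{equation}
{\rm meas}(Y_\eps \cap [0,T]) \leq \frac{2\eps}{\log T} \sum_{\rho \,:\, |\gamma| \leq T+1} x^{4|\beta|},
\end{equation}
where the $+1$ harmlessly absorbs zeros whose $\eps x^{4|\beta|}/\log T$-neighborhood still meets $[0,T]$ (valid since $x^{4|\beta|} \leq T^{1/10}$). Selberg's zero density estimate $N(\sigma, T) \ll T^{1 - c(\sigma - \hf)} \log T$ with $c = 1/4$ controls $N^*(u) := \#\{\rho : |\beta| > u,\, |\gamma| \leq T+1\}$ by $T^{1 - u/4} \log T$, and with $x = T^{1/20}$ integration by parts yields
\begin{equation}
\sum_\rho x^{4|\beta|} = N^*(0) + 4 (\log x) \int_0^{1/2} x^{4u} N^*(u)\, du \ll T \log T + T (\log T)(\log x) \int_0^{1/2} T^{-u/20}\, du \ll T \log T,
\end{equation}
since the integral contributes an $O(1/\log T)$ factor. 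Dividing by $\log T$ gives the desired $O(\eps T)$ bound.

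For the second bullet, I would fix $t \in [0, T] \setminus Y_\eps$ and any zero $\rho = \hf + \beta + i\gamma$ entering the definition of $\sigma_{x,t}$, meaning $|t - \gamma| \leq x^{3|\beta|}/\log x$. Since $t \notin Y_\eps$, we also have $|t - \gamma| > \eps x^{4|\beta|}/\log T$. Combining these and using $\log x = (\log T)/20$,
\begin{equation}
\frac{\eps \, x^{4|\beta|}}{\log T} < \frac{20 \, x^{3|\beta|}}{\log T} \Longrightarrow x^{|\beta|} < 20/\eps \Longrightarrow |\beta| < \frac{20 \log(20/\eps)}{\log T}.
\end{equation}
Together with $2/\log x = 40/\log T$, this yields $\sigma_{x,t} - \hf = 2 \max_\rho \max(\beta, 2/\log x) \ll \log(1/\eps)/\log T$, which is bounded by $\eta_c = 40 \log(1/\eps)/\log T$ once $\eps$ is below an absolute threshold (the constant $40$ in the definition of $\sigma_c$ is chosen to absorb the harmless $\log 20$ and the factor $2$ inside the maximum).

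The main obstacle is the measure estimate: its success hinges on the choice $x = T^{1/20}$, which calibrates so that the exponential weight $x^{4|\beta|} = T^{|\beta|/5}$ is defeated by Selberg's zero density decay $T^{-|\beta|/4}$. The margin $1/4 - 1/5 = 1/20$ is exactly what makes the integral above converge with an extra $1/\log T$ factor. A weakening of the zero density input would force a smaller exponent for $x$, with consequences propagating through Lemma \ref{selblem} into the moment calculations of Section 5. The second bullet, by contrast, is essentially a one-line algebraic manipulation.
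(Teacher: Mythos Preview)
Your proposal is correct and follows essentially the same approach as the paper: both parts are handled identically, first by coupling the covering bound $\sum_\rho x^{4|\beta|}$ with Selberg's zero-density estimate via Abel summation (the paper phrases this through an explicit ordering $\beta_1 \geq \beta_2 \geq \ldots$ rather than the counting function $N^*(u)$, but the computation is the same), and second by the same algebraic comparison of the two inequalities on $|t-\gamma|$. Your remark about the extra $\log 20$ is accurate---the paper's displayed inequality writes $\log x$ where the definition of $Y_\eps$ has $\log T$, which suppresses exactly this constant; either way the claim holds once $\eps$ is below an absolute threshold, which is all that is needed.
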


\begin{proof}
    We may ignore the zeroes to the left of the critical axis, since $\zeta$ has reflectional symmetry with regards to the critical axis.
    
    Set, for $\eta \geq 0$,

    \begin{equation}
        \mathcal{N}(\eta, T) = \# \{ \rho = \Hf + \beta + i \gamma \text{ such that } \zeta(\rho) = 0, \beta \geq \eta, 0 \leq \gamma \leq T \}.
    \end{equation}
    It is known (see \cite{Sel1946}) that $ \mathcal{N}(\eta, T) \ll T \log T \exp(-\frac{1}{4} \eta \log T)$. Thus, setting $N = \mathcal{N}(0, T)$, we may label $\hf + \beta_1 \geq \hf + \beta_2 \geq \ldots \geq \hf + \beta_N$ the abscissae of zeroes of $\zeta$ with ordinates in $[0, T]$. Now:
    \begin{equation}
    \begin{split}
            |Y_\eps \cap [0, T]|  \leq \sum_{i = 1}^{N - 1} 2 \eps \frac{x^{4 \beta_i}}{\log x}
  & = \frac{1}{2} \eps \sum_{i=1}^{N-1} \mathcal{N}(\beta_i, T)\int_{\beta_{i+1}}^{\beta_i} x^{4 \beta} d\beta + 2 \eps N \frac{x^{4 \beta_N}}{\log x}
  \\
  & \leq \eps \int_0^{1} x^{4 \beta} \mathcal{N}(\beta, T) d\beta + 2 \eps N \frac{x^{4 \beta_N}}{\log x}.
  \end{split}
    \end{equation}
   Since $\beta_N \ll \dfrac{1}{\log T}$ and $N \ll T \log T$, $2 \eps N \dfrac{x^{4 \beta_N}}{\log x} = O(\eps T)$. Meanwhile,
    \begin{equation}
            \int_0^{1} x^{4 \beta} \mathcal{N}(\beta, T) d\beta  \ll T \log T\int_0^1 T^{-\frac{\beta}{20}} d \beta 
             \ll T.
    \end{equation}
    This shows the first part of Lemma \ref{cutset}. For the second point, simply note that, if $t \in [0, T] \setminus Y_{\eps}$, and $\rho = \hf + \beta + i \gamma$ is a zero of $\zeta$ such that $|t - \gamma| \leq \frac{x^{3 \beta}}{\log x}$,

    \begin{equation}
        \eps \frac{x^{4 \beta}}{\log x} \leq |t - \gamma| \leq \frac{x^{3 \beta}}{\log x} \text{ so } x^{\beta} \leq \frac{1}{\eps} \text{, and } \beta \leq \frac{\log \frac{1}{\eps}}{\log x}.
    \end{equation}
    Taking the maximum over all applicable $\rho$, we see that $\sigma_{x, t} \leq \Hf + \dfrac{40 \log \frac{1}{\eps}}{\log T}$.
\end{proof}

This means that we can set $A_\eps^T = ( \tau \notin Y_\eps, \tau > \sqrt{T})$ and apply Lemma \ref{selblem} to tackle (\ref{toshow}):
\begin{equation}
    \begin{split}
        \E\left[\left|\int_{\sigma_2}^{\sigma_1}e_x(\sigma + i \tau) d \sigma\right|^4\mathbbm{1}_{A_\eps^T}\right] & \ll \left( \E \left[ \left|\sum_{n \leq x^3} \frac{\Lambda_x(n)}{n^{\sigma_{x, \tau} + i\tau}} \right|^4 \right] + (\log T)^4\right)  \min(\frac{x^{-2\eta_1}}{(\log x)^4}, (\eta_1 - \eta_2)^4 x^{-2\eta_2}).
    \end{split}
\end{equation}

To conclude, we will use the following proposition, whose proof will be given shortly:

\begin{proposition} We have
\label{prop1}
$$\E \left[ \left| \sum_{n \leq x^3} \frac{\Lambda_x(n)}{n^{\sigma_{x, \tau} + i\tau}} \right|^4\mathbbm{1}_{t \notin Y_{\eps}}\right] \ll_\eps (\log T)^4.$$
\end{proposition}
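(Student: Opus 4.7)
The plan is to localize $\sigma_{x,\tau}$ to a short deterministic interval and then convert the resulting supremum into usable $L^4$ moments of $F(s):=\sum_{n\leq x^3}\Lambda_x(n)/n^s$ and of its derivative $F'$ via a fundamental-theorem-of-calculus estimate. From the definition, $\sigma_{x,\tau}\geq \hf+4/\log x$ unconditionally, and on $\{\tau\notin Y_\eps\}$ Lemma \ref{cutset} gives $\sigma_{x,\tau}\leq \sigma_c$. Hence on this event $\sigma_{x,\tau}\in I:=[\hf+4/\log x,\sigma_c]$ with $|I|\ll_\eps 1/\log T$. Since $\bigl|\tfrac{d}{d\sigma}|F|^4\bigr|\leq 4|F|^3|F'|$,
\[
\Big|\sum_{n\leq x^3}\frac{\Lambda_x(n)}{n^{\sigma_{x,\tau}+i\tau}}\Big|^4\mathbbm{1}_{\tau\notin Y_\eps}\leq \sup_{\sigma\in I}|F(\sigma+i\tau)|^4\leq |F(\sigma_c+i\tau)|^4+4\int_I|F(\sigma+i\tau)|^3|F'(\sigma+i\tau)|\,d\sigma,
\]
which reduces the proposition to $L^4$ bounds on $F$ and $F'$ at fixed $\sigma\in I$.

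For fixed $\sigma\in I$, I would apply Lemma \ref{lemmaphe} with $\phe(n)=\Lambda_x(n)/n^{\sigma-\hf}$---the hypothesis $\phe(p^i)\leq\phe(p)$ follows from $|\Lambda_x(p^i)|\leq\log p$ together with $p^{(i-1)(\sigma-\hf)}\geq 1$---to obtain
\[
\E[|F(\sigma+i\tau)|^4]\ll \Big(\sum_{p\leq x^3}\frac{(\log p)^2}{p^{2\sigma}}\Big)^2.
\]
An elementary Mertens-type computation in the style of Lemma \ref{tech} gives $\sum_p(\log p)^{2k}/p^{2\sigma}\ll (\sigma-\hf)^{-2k}$, which for $\sigma-\hf\geq 4/\log x$ is $\ll (\log T)^{2k}$. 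Specialized to $k=1$ this yields $\E[|F(\sigma+i\tau)|^4]\ll (\log T)^4$ uniformly over $\sigma\in I$. For $F'(\sigma+i\tau)=-\sum_n\Lambda_x(n)\log n/n^{\sigma+i\tau}$, the same Montgomery--Vaughan argument from the proof of Lemma \ref{lemmaphe} applies with $a_n=\Lambda_x(n)\log n$, after separating out the higher prime-power terms $p^i$ ($i\geq 2$) whose contribution is $O(1)$. This gives $\E[|F'(\sigma+i\tau)|^4]\ll \big(\sum_p(\log p)^4/p^{2\sigma}\big)^2\ll (\log T)^8$, uniformly over $\sigma\in I$.

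Finally, H\"older at each fixed $\sigma$ gives $\E[|F|^3|F'|]\leq (\E|F|^4)^{3/4}(\E|F'|^4)^{1/4}\ll (\log T)^5$, and integrating over $I$,
\[
\int_I\E[|F(\sigma+i\tau)|^3|F'(\sigma+i\tau)|]\,d\sigma\ll |I|\cdot(\log T)^5\ll_\eps (\log T)^4,
\]
which combined with $\E[|F(\sigma_c+i\tau)|^4]\ll (\log T)^4$ closes the proof. The main obstacle is the four-power loss in the derivative moment: $\E[|F'|^4]\ll (\log T)^8$, whereas the target is $(\log T)^4$. The only available saving is the factor $|I|\ll_\eps 1/\log T$ coming from the narrowness of the window to which the cut-set $Y_\eps$ confines $\sigma_{x,\tau}$. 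This is the essential role of $Y_\eps$: forcing $\sigma_{x,\tau}\leq \sigma_c=\hf+O_\eps(1/\log T)$ makes $|I|$ just small enough to absorb the derivative excess and recover the target scale $(\log T)^4$. A secondary, purely bookkeeping issue is the mild adaptation of Lemma \ref{lemmaphe} needed to accommodate the extra $\log n$ appearing in $F'$.
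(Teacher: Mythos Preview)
Your proof is correct and takes a different route from the paper's. Both arguments use the event $\{\tau\notin Y_\eps\}$ to confine $\sigma_{x,\tau}$ to a window of width $O_\eps(1/\log T)$, and both ultimately rest on Lemma \ref{lemmaphe} at a fixed abscissa $\eta\asymp 1/\log T$, but they eliminate the random $\sigma_{x,\tau}$ differently. The paper anchors at $\eta_0=1/\log T$, writes
\[
F(\sigma_{x,\tau}+i\tau)-F(\tfrac12+\eta_0+i\tau)=\sum_{n\leq x^3}\frac{\Lambda_x(n)}{n^{\hf+\eta_0+i\tau}}\bigl(1-n^{-(\eta_{x,\tau}-\eta_0)}\bigr),
\]
and exploits the pointwise fact that on the event $(\eta_{x,\tau}-\eta_0)\log n\leq 3(\eta_c-\eta_0)\log x\ll_\eps 1$ uniformly in $n\leq x^3$, so that the extra coefficient is bounded by a constant depending only on $\eps$; it then reduces directly to the single moment $\E[|F(\hf+\eta_0+i\tau)|^4]\ll(\log T)^4$, never invoking a derivative moment. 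Your approach instead bounds $\sup_{\sigma\in I}|F|^4$ via the fundamental theorem of calculus and H\"older, paying $\E[|F'|^4]\ll(\log T)^8$ and recovering the extra $(\log T)^4$ from the window width $|I|\ll_\eps 1/\log T$. Your route is slightly longer but arguably cleaner: every step is a genuine inequality at the level of expectations, and one never has to worry about how a termwise bound on coefficients interacts with the oscillatory phases $n^{-i\tau}$. One minor remark: the hypothesis $\phe(p^i)\leq\phe(p)$ of Lemma \ref{lemmaphe} is not literally satisfied by $\phe(n)=\Lambda_x(n)/n^{\sigma-\hf}$ when $p>x$ (since then $\Lambda_x(p)<\log p$); as you already do for $F'$, first stripping off the $O(1)$ contribution of higher prime powers makes the hypothesis vacuous. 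The paper makes the same implicit move.
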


Assuming Proposition \ref{prop1}, we can consider two cases.
First,  if $b - a \leq \dfrac{1}{\log \log T}$, then $\dfrac{\eta_1}{\eta_2} \ll 1$ and, applying (\ref{iaf}),
    \begin{multline*}
            \E[\int_{\sigma_2}^{\sigma_1}|e_x(\sigma + i \tau)|^4 d \sigma\mathbbm{1}_{A_\eps^T}] 
             \ll_\eps (\log T)^4 \eta_1^4 (b-a)^4 (\log \log T)^4 x^{-2\eta_2} \\
             \ll (b-a)^4 (\eta_2 \log T)^4 e^{-\frac{1}{10} \eta_2 \log T}(\log \log T)^4 
             \ll (b-a)^2 (\log \log T)^2.
    \end{multline*}
If $b-a \geq \dfrac{1}{\log \log T}$, then
    \begin{equation}
            \E[\int_{\sigma_2}^{\sigma_1}|e_x(\sigma + i \tau)|^4 d \sigma\mathbbm{1}_{A_\eps^T}] 
             \ll_\eps \left(\frac{\log T}{\log x}\right)^4 x^{-2 \eta_2} 
             \ll 1  \ll (b-a)^2 (\log \log T)^2.
    \end{equation}
We therefore just need to show Proposition \ref{prop1}.
\begin{proof}

Set $\eta_0 = \dfrac{1}{\log T}$: then,
\begin{equation}
\label{uhhhhh}
    \E \left[ \left| \sum_{n \leq x^3} \frac{\Lambda_x(n)}{n^{\sigma_{x, \tau} + i\tau}} \right|^4\mathbbm{1}_{A_\eps^T}\right] \leq 8\E \left[ \left| \sum_{n \leq x^3} \frac{\Lambda_x(n)}{n^{\hf + \eta_0 + i\tau}} \right|^4\right] + 8\E \left[ \left| \sum_{n \leq x^3} \frac{\Lambda_x(n)}{n^{\hf + i\tau}} \left(n^{-\eta_0} - n^{-\eta_{x, \tau}} \right) \right|^4\mathbbm{1}_{A_\eps^T}\right].
\end{equation}
However, applying Lemma \ref{lemmaphe} with $\phe(n) = \Lambda_x(n) n^{-\eta_0}$:
\begin{equation}
    \E \left[ \left| \sum_{n \leq x^3} \frac{\Lambda_x(n)}{n^{\hf + \eta_0 + i\tau}} \right|^4\right]  \ll \left( \sum_{p \leq x^3 \text{ prime}} \frac{(\log p)^2}{p^{1 + 2\eta_0}} \right)^2
     \ll \left(\int_2^{A} \frac{(\log t)^{1-2 \eta_0}}{t^{1 + 2 \eta_0}} dt \right)^2
\end{equation}
for some $A \sim \dfrac{x^3}{3 \log x}$. Setting $u = \eta_0 \log t$ and changing variables,
\begin{equation}
        \int_2^{A} \frac{(\log t)^{1-2 \eta_0}}{t^{1 + 2 \eta_0}} dt  = \dfrac{1}{\eta_0^{2-2\eta_0}} \int_{\eta_0 \log 2}^{\eta_0 \log A} \frac{u^{1-2\eta_0}}{e^{2u}}du
 \ll \eta_0^{-2}.
\end{equation}
 Meanwhile, if we look at the second term in (\ref{uhhhhh}), applying Hölder's inequality:
\begin{equation*}
    \begin{split}
         \E \left[ \left| \sum_{n \leq x^3} \frac{\Lambda_x(n)}{n^{\hf + i\tau}} \left(n^{-\eta_0} - n^{-\eta_{x, \tau}} \right) \right|^4\mathbbm{1}_{A_\eps^T}\right] 
         & \ll \E \left[ \left| \int_{\eta_0}^{\eta_{x, \tau}} \sum_{n \leq x^3} \frac{\Lambda_x(n) \log n}{n^{\hf + \eta + i\tau}} d \eta \right|^4\mathbbm{1}_{A_\eps^T}\right] \\
         & \leq (\eta_c - \eta_0)^3 \int_{\eta_0}^{\eta_c}  \E \left[ \left| \sum_{n \leq x^3} \frac{\Lambda_x(n) \log n}{n^{\hf + \eta + i\tau}} \right|^4 \mathbbm{1}_{A_\eps^T} \right] d \eta \\
         & \ll_\eps \eta_0^{-4} \ll (\log T)^{4}
    \end{split}
\end{equation*}
by the same reasoning, applying Lemma \ref{lemmaphe}. This gives the expected result.
\end{proof}

\subsection{Towards the critical line}.\ 
As mentioned earlier, the case $\sigma_2 \leq \sigma_c$ needs to be handled separately. Specifically, it still remains to be shown that:
\begin{proposition}
\label{useabove}
    If $\sigma_2 \leq \sigma_1 \leq \sigma_c$,
\begin{equation}
    \E[|\log \zeta (\sigma_1 + i \tau) - \log \zeta (\sigma_2 + i \tau)|^4 \mathbbm{1}_{A_\eps^T}] \ll_\eps (b-a)^2(\log \log T)^2.
\end{equation}
\end{proposition}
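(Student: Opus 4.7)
The key structural feature of this regime is that the constraint $\sigma_2 \leq \sigma_1 \leq \sigma_c$ pins the exponents to $a, b \in [1 - \log(40\log(1/\eps))/\log \log T,\, 1]$, so $(b-a)\log\log T \leq \log(40\log(1/\eps)) = O_\eps(1)$ and in particular $(b-a)^2(\log\log T)^2 = O_\eps(1)$. This slack means that any bound of the form $O_\eps((b-a)^4(\log\log T)^4)$ is also at most $O_\eps((b-a)^2(\log\log T)^2)$, hence good enough to close the estimate.

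My plan is to reuse the Selberg decomposition from Section~4,
\begin{equation*}
\log \zeta(\sigma_1 + i\tau) - \log \zeta(\sigma_2 + i\tau) = D + I,
\end{equation*}
where $D$ is the Dirichlet-sum difference of Proposition~\ref{lemma} and $I = \int_{\sigma_2}^{\sigma_1} e_x(\sigma + i\tau)\,d\sigma$. Proposition~\ref{lemma} itself has no regime restriction, so it already supplies $\E[|D|^4] \ll (b-a)^2(\log\log T)^2$. Only the integral remainder $I$ requires genuinely new work here.

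For $I$, I would apply H\"older's inequality,
\begin{equation*}
|I|^4 \leq (\sigma_1 - \sigma_2)^3 \int_{\sigma_2}^{\sigma_1} |e_x(\sigma + i\tau)|^4 \, d\sigma,
\end{equation*}
and combine it with the bound $\sigma_1 - \sigma_2 \ll_\eps (b-a)(\log\log T)/\log T$ (from (\ref{iaf}) and $\eta_1 \leq \eta_c$) to reduce matters, via Fubini, to a pointwise $L^4$ control
\begin{equation*}
\sup_{\sigma \in [\frac{1}{2}, \sigma_c]} \E\big[|e_x(\sigma + i\tau)|^4 \mathbbm{1}_{A_\eps^T}\big] \ll_\eps (\log T)^4.
\end{equation*}
Feeding this back yields $\E[|I|^4 \mathbbm{1}_{A_\eps^T}] \ll_\eps (b-a)^4 (\log\log T)^4$, which then closes the argument by the opening observation.

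The main obstacle is precisely this pointwise $L^4$ bound on $e_x$ close to the critical line. Splitting $A_\eps^T$ according to whether $\sigma \geq \sigma_{x,\tau}$ or $\sigma < \sigma_{x,\tau}$, the first subregion follows from Lemma~\ref{selblem} combined with Proposition~\ref{prop1} exactly as in Section~4. The harder subregion $\sigma \in [\frac{1}{2},\, \sigma_{x,\tau})$ requires a direct bound on $\zeta'/\zeta$ via the standard explicit formula $\zeta'/\zeta(s) = \sum_{|\gamma - t| \leq 1}(s-\rho)^{-1} + O(\log T)$, exploiting the separation $|s - \rho| \geq |\tau - \gamma| \geq \eps x^{4|\beta|}/\log T$ enforced on $A_\eps^T$, together with zero-density estimates in the spirit of those used in the proof of Lemma~\ref{cutset}. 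Since the Dirichlet-sum part of $e_x = \zeta'/\zeta + \sum_n \Lambda_x(n)/n^s$ is handled by Lemma~\ref{lemmaphe} (giving $O((\log T)^4)$), the whole reduction turns on this $\zeta'/\zeta$ fourth-moment estimate on $A_\eps^T$, which is where the bulk of the technical effort goes.
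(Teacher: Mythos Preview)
Your high-level strategy is sound, and your opening observation that $(b-a)\log\log T \ll_\eps 1$ in this regime---so that a $(b-a)^4(\log\log T)^4$ bound suffices---is exactly right and matches the paper. However, the route you take is more circuitous than necessary, and the sketched argument for the hard step has a genuine gap.

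\textbf{Comparison with the paper's approach.} The paper does \emph{not} go through the Selberg decomposition $D+I$ here at all. It simply writes
\[
\log\zeta(\sigma_1+i\tau)-\log\zeta(\sigma_2+i\tau)=-\int_{\sigma_2}^{\sigma_1}\frac{\zeta'}{\zeta}(\sigma+i\tau)\,d\sigma
\]
and bounds $\zeta'/\zeta$ directly. Your detour through $e_x$ only to re-expand $e_x=\zeta'/\zeta+\sum\Lambda_x(n)/n^s$ and bound $\zeta'/\zeta$ anyway is extra bookkeeping with no payoff; the $D$ term and the Dirichlet part of $e_x$ essentially cancel.

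\textbf{The gap.} The crux, as you correctly identify, is the bound $\E[|\zeta'/\zeta(\sigma+i\tau)|^4\mathbbm{1}_{A_\eps^T}]\ll_\eps(\log T)^4$ for $\sigma\in[\tfrac12,\sigma_c]$. Your proposed tools---the separation $|\tau-\gamma|\geq\eps x^{4|\beta|}/\log T$ on the \emph{original} $A_\eps^T$ plus zero-density---are not sharp enough. On the original event one only gets the pointwise bound $\sum_\rho 1/(t-\gamma)^2\ll_\eps(\log T)^3$ (each of the $\ll\log T$ nearby zeros contributing up to $(\log T/\eps)^2$), and moment estimates built on this lose several powers of $\log T$. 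Zero-density does not rescue this: it controls how many zeros have large $\beta$, but the dominant contribution to the sum comes from zeros near the line, where the separation is only $\eps/\log T$.

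\textbf{The missing idea.} The paper's key move is to \emph{enlarge} $Y_\eps$ (equivalently, shrink $A_\eps^T$) using a first-moment Markov argument: since $\E[\sum_\rho 1/(t-\gamma)^2\,\mathbbm{1}_{t\notin Y_\eps}]\ll_\eps(\log T)^2$, one can throw away a further set of measure $O(\eps T)$ and arrange that $\sum_\rho 1/(t-\gamma)^2\ll_\eps(\log T)^2$ holds \emph{pointwise} on the new event. The partial-fraction identity then gives
\[
\Bigl|\frac{\zeta'}{\zeta}(\sigma+it)-\frac{\zeta'}{\zeta}(\sigma_c+it)\Bigr|\ll(\eta_c-\eta)\sum_\rho\frac{1}{(t-\gamma)^2}+O(\log T)\ll_\eps\log T
\]
deterministically on the enlarged $A_\eps^T$, reducing the fourth-moment question to the single point $\sigma_c$, which sits in the regime already handled by Section~4. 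This enlargement trick is what you are missing; with it, your approach would go through, but the paper's direct route is cleaner.
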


The case where $\sigma_2 \leq \sigma_c \leq \sigma_1$ follows easily, applying the "triangular inequality" $|a+b|^4 \ll |a|^4 + |b|^4$.

\begin{proof}

Note that, if $\sigma \leq \sigma_c,$ and $0 \leq t \leq T$,

\begin{equation}
    \begin{split}
        |\frac{\zeta'}{\zeta}(\sigma + it) - \frac{\zeta'}{\zeta}(\sigma_c + it)| & = |\sum_{\rho = \beta + i \gamma} \frac{1}{\eta - \beta + i (t - \gamma)} -  \frac{1}{\eta_c - \beta + i (t - \gamma)} + O(\log T)| \\
        & \leq \sum_{\rho} \frac{\eta_c - \eta}{|\eta - \beta + i (t - \gamma)||\eta_c - \beta + i (t - \gamma)|} + O(\log T) \\
        & \leq (\eta_c - \eta)\sum_\rho \frac{1}{(t - \gamma)^2} + O(\log T) .
    \end{split}
\end{equation}
Now, 
$$\E \left[ \sum_\rho \frac{1}{(t - \gamma)^2} \mathbbm{1}_{t \notin Y_{\eps}} \right] \ll_\eps (\log T)^2.$$

As a result, we can increase the size of $Y_\eps$ in such a way that the measure of $Y_\eps \cap [0, T]$ remains $O(\eps T)$, and if $t \in [0, T] \setminus Y_\eps$,
\begin{equation}
\label{logder}
    \sum_\rho \frac{1}{(t - \gamma)^2} \ll_\eps (\log T)^2 \text{ and so } |\frac{\zeta'}{\zeta}(\sigma + it) - \frac{\zeta'}{\zeta}(\sigma_c + it)| \ll_\eps \log T.
\end{equation}
Furthermore, by applying Lemma \ref{selblem} and adapting the proof of Proposition \ref{prop1}, we can see that
\begin{equation}
    \E[|\frac{\zeta'}{\zeta}(\sigma_c + i \tau)|^4 \mathbbm{1}_{A_\eps^T} ] \ll  \E[|e_x(\sigma_c + i \tau)|^4\mathbbm{1}_{A_\eps^T} ] + \E\left[\left|\sum_{n \leq x^3} \frac{\Lambda_x(n)}{n^{\sigma_c + it}} \right|^4\mathbbm{1}_{A_\eps^T}\right ] \ll_\eps (\log T)^4.
\end{equation}
Consequently,
\begin{equation} \label{lasteq}
    \begin{split}
        \E [ |\log \zeta (\sigma_1 + i \tau) - \log \zeta (\sigma_2 + i\tau)|^4 \mathbbm{1}_{A_\eps^T} ] & \ll \E \left[ \left|\int_{\sigma_2}^{\sigma_1} \frac{\zeta'}{\zeta}(\sigma + i\tau)d\sigma\right|^4\mathbbm{1}_{A_\eps^T}\right] \\
        & \ll_\eps (\sigma_1 - \sigma_2)^4 ((\log T)^4 + \E [|\frac{\zeta'}{\zeta} (\sigma_c + i\tau)|^4\mathbbm{1}_{A_\eps^T}]) \\
        & \ll_\eps (\eta_1 \log T)^4 (b - a)^4 (\log \log T)^4 \\
        & \ll_\eps (b - a)^2 (\log \log T)^2
    \end{split}
\end{equation}
given that $b - a \ll_\eps \dfrac{1}{\log \log T}$.
\end{proof}

\begin{bibdiv}
\begin{biblist}

\bib{ArgBelBouRadSou2016}{article}{
   author={Arguin, L.-P.},
   author={Belius, D.},
   author={Bourgade, P.},
   author={Radziwi\l \l , M.},
   author={Soundararajan, K.},
   title={Maximum of the Riemann zeta function on a short interval of the
   critical line},
   journal={Comm. Pure Appl. Math.},
   volume={72},
   date={2019},
   number={3},
   pages={500--535}
}

\bib{arguin2020fyodorov1}{article}{
  title={The Fyodorov-Hiary-Keating Conjecture. I},
   author={Arguin, L.-P.},
   author={Bourgade, P.},
   author={ Radziwi{\l}{\l}, M.},
  journal={arXiv preprint arXiv:2007.00988},
  year={2020}
}

\bib{arguin2020fyodorov2}{article}{
  title={The Fyodorov-Hiary-Keating Conjecture. II},
   author={Arguin, L.-P.},
   author={Bourgade, P.},
   author={ Radziwi{\l}{\l}, M.},
  journal={arXiv preprint arXiv:2307.00982},
  year={2023}
}

\bib{bourgade}{article}{
   author={Bourgade, P.},
   title={Mesoscopic fluctuations of the zeta zeros},
   journal={Probab. Theory Related Fields},
   volume={148},
   date={2010},
   number={3-4},
   pages={479--500}
}
\bib{PoincareSeminar}{article}{
   author={Bourgade, P.},
   author={Keating, J. },
   title={Quantum chaos, random matrix theory, and the Riemann
   $\zeta$-function},
   conference={
      title={Chaos},
   },
   book={
      series={Prog. Math. Phys.},
      volume={66},
      publisher={Birkh\"{a}user/Springer, Basel},
   },
   date={2013}
}
\bib{FyoHiaKea2012}{article}{
   author={Fyodorov, Y.},
   author={Hiary, G.},
   author={Keating, J.},
   title={Freezing Transition, Characteristic Polynomials of Random Matrices, and the Riemann Zeta Function},
   journal={Physical Review Letters},
   volume={108},
   date={2012}
}

\bib{fyodorov2014freezing}{article}{
  title={Freezing transitions and extreme values: random matrix theory, and disordered landscapes},
  author={Fyodorov, Y.},
  author={Keating, J.},
  journal={Philosophical Transactions of the Royal Society A},
  volume={372},
  number={2007},
  pages={20120503},
  year={2014},
  publisher={The Royal Society Publishing.}
}

\bib{Har2020}{article}{
   author={Harper, A.},
   title={The Riemann zeta function in short intervals [after Najnudel, and
   Arguin, Belius, Bourgade, Radziwi\l\l   and Soundararajan]},
   journal={Ast\'{e}risque},
   number={422, S\'{e}minaire Bourbaki. Vol. 2018/2019. Expos\'{e}s 1151--1165},
   date={2020},
   pages={Exp. No. 1161, 391--414}
}

\bib{Har2019}{article}{
    AUTHOR = {Harper, A. },
     TITLE = {On the partition function of the Riemann zeta function, and the Fyodorov-Hiary-Keating conjecture},
   JOURNAL = {arXiv preprint arXiv:1906.05783},
      YEAR = {2019},
}

\bib{itlog}{article}{
   author={Khintchine, A.},
   title={Uber einen Satz der Wahrscheinlichkeitsrechnung},
   journal={Fundamenta Mathematicae},
   date={1924},
   number={6},
   pages={9--20}
}

\bib{pair}{article}{
   author={Montgomery, H. L.},
   title={The pair correlation of zeros of the zeta function},
   conference={
      title={Analytic number theory},
      address={Proc. Sympos. Pure Math., Vol. XXIV, St. Louis Univ., St.
      Louis, Mo.},
      date={1972},
   },
   book={
      series={Proc. Sympos. Pure Math., Vol. XXIV},
      publisher={Amer. Math. Soc., Providence, RI},
   },
   date={1973},
   pages={181--193}
}
\bib{hilbert}{article}{
   author={Montgomery, H. L.},
   author={Vaughan, R. C.},
   title={Hilbert's inequality},
   journal={J. London Math. Soc. (2)},
   volume={8},
   date={1974},
   pages={73--82},
   issn={0024-6107}
}

\bib{Naj16}{article}{
   author={Najnudel, J.},
   title={On the extreme values of the Riemann zeta function on random
   intervals of the critical line},
   journal={Probab. Theory Related Fields},
   volume={172},
   date={2018},
   number={1-2},
   pages={387--452}
}

\bib{revuzyor}{book}{
  title={Continuous Martingales and Brownian Motion},
  author={Revuz, D. and Yor, M.},
  isbn={9783540643258},
  lccn={98053189},
  series={Grundlehren der mathematischen Wissenschaften},
  url={https://books.google.fr/books?id=1ml95FLM5koC},
  year={2004},
  publisher={Springer Berlin Heidelberg}
}

\bib{prokh}{article}{
   author={Prokhorov, Y.},
   title={Convergence of random processes and limit theorems in probability
   theory},
   language={Russian, with English summary},
   journal={Teor. Veroyatnost. i Primenen.},
   volume={1},
   date={1956},
   pages={177--238}
}
\bib{RadSou2017}{article}{
   author={Radziwi\l \l , M.},
   author={Soundararajan, K.},
   title={Selberg's central limit theorem for $\log{|\zeta(1/2+it)|}$},
   journal={Enseign. Math.},
   volume={63},
   date={2017},
   number={1-2},
   pages={1--19}
}
\bib{Sel1946}{article}{
   author={Selberg, A.},
   title={Contributions to the theory of the Riemann zeta-function},
   journal={Arch. Math. Naturvid.},
   volume={48},
   date={1946},
   number={5},
   pages={89--155}
}
\bib{MR4680281}{article}{
   author={Soundararajan, K.},
   title={The distribution of values of zeta and $L$-functions},
   conference={
      title={ICM---International Congress of Mathematicians. Vol. 2. Plenary
      lectures},
   },
   book={
      publisher={EMS Press, Berlin},
   },
   date={[2023] \copyright 2023},
   pages={1260--1310}
}

\end{biblist}
\end{bibdiv}

\end{document}